\newtheorem{theorem}{THEOREM} 
\newtheorem{proposition}{PROPOSITION}
\theoremstyle{definition}
\newtheorem{definition}{DEFINITION}
\newtheorem{example}{EXAMPLE}[section]
\theoremstyle{plain}
\newcommand{\R}{\mathbb{R}}
\newcommand{\RightArrow}[1]{%
\parbox{#1}{\tikz{\draw[dashed,->](0,0)--(#1,0);}}
}
\newcommand{\LeftArrow}[1]{%
\parbox{#1}{\tikz{\draw[dashed,<-](0,0)--(#1,0);}}
}
\author{Omar De la Cruz Cabrera\thanks{Department of Mathematical Sciences, Kent State
University, Kent, OH 44242, USA.\hfill\break  Email: \texttt{odelacru@kent.edu} (O. De la Cruz Cabrera), 
\texttt{jjin3@kent.edu} (J. Jin), \texttt{reichel@math.kent.edu} (L. Reichel)}
\and
Jiafeng Jin\footnotemark[1] 
\and
Silvia Noschese\thanks{Dipartimento di Matematica, SAPIENZA Universit\`a di Roma, 
P.le Aldo Moro 5, 00185 Roma, Italy. \hfill\break Email: \texttt{noschese@mat.uniroma1.it}}
\and
Lothar Reichel\footnotemark[1]}
\title{Communication in Complex Networks}
\begin{document}
\setlength{\arrayrulewidth}{1pt}
\maketitle


\begin{abstract}
One of the properties of interest in the analysis of networks is \emph{global 
communicability}, i.e., how easy or difficult it is, generally, to reach nodes from other 
nodes by following edges. Different global communicability measures provide quantitative
assessments of this property, emphasizing different aspects of the problem.

This paper investigates the sensitivity of global measures of communicability to local changes.
In particular, for directed, weighted networks, we study how different global measures of
communicability change when the weight of a single edge is changed; or, in the unweighted
case, when an edge is added or removed.
The measures we study include the \emph{total network communicability}, based on the
matrix exponential of the adjacency matrix, and the \emph{Perron network communicability},
defined in terms of the Perron root of the adjacency matrix and the associated left and right 
eigenvectors.

Finding what local changes lead to the largest changes in global communicability has
many potential applications, including assessing the 
resilience of a system to failure or attack, guidance for incremental system improvements, 
and studying the sensitivity of global communicability measures to errors in the network
connection data.

\end{abstract}

\section{Introduction}\label{intro}
Many complex phenomena can be usefully modeled by networks. 
Mathematically, a 
network is represented by a graph, which consists of a set of vertices or nodes, and a set
of edges that connect pairs of vertices. Network models often simplify the representation 
of a complex system by disregarding some minutiae of reality, to make it feasible to use 
mathematical and computational methods of analysis; see, e.g.,
\cite{estrada2011structure,newman2010} for many examples. 

Sometimes, additional information
about the vertices and/or edges is indispensable for a fuller and more realistic 
understanding of a complex system. Examples include the use of \emph{weighted networks} 
\cite{barrat2004weighted,dlcmr,newman2004analysis}, in which edges between vertices are 
assigned different numerical values, so-called ``weights.'' In our setting, a higher 
weight for a given edge corresponds to a higher communication capacity between the nodes 
it connects.

An important characteristic of a network is how well communication can flow in it, i.e.,
how easy or difficult it is to reach one part of the network from another part by 
following edges. Several measures have been considered for quantifying communicability
on a global scale.
They include the \emph{diameter} of the graph that represents the network, the 
\emph{average distance} between nodes of this graph, and the communicability betweenness
of nodes; see Estrada et al. \cite{estrada2009communicability}. Information transfer 
between nodes also is 
studied with the aid of the thermal Green's function; here the network is considered
submerged in a thermal bath of some temperature $T$; see Estrada et al. 
\cite{estrada2021,estrada2011physics}. In this paper we concentrate on two communicability
measures: the 
\emph{total network communicability}, which was introduced by Benzi and Klymko 
\cite{benzi2013total}, and the \emph{Perron network communicability}, which we describe
below.

This article explores the sensitivity of the communicability measures mentioned to small,
local, changes of a network. Knowledge of the sensitivity can help answer several  
important questions about a network such as:
\begin{itemize}
\item 
How robust is a network to disturbances or attacks, and how can the network be modified to
be more robust?
\item 
Which edges of a network are very vulnerable, in the sense that the communicability 
decreases (relatively) significantly if these edges are removed? 
\item 
Can an addition of a new edge increase the communicability (relatively) significantly?
\item
Can a network be simplified by removing a few edges and retain essentially the same
communicability?
\item
How sensitive is the measured communicability of a network to incomplete information about 
the existing edges?
\end{itemize}

The graphs we consider may be unweighted or weighted. In an unweighted graph all edges have
the same weight, which we will choose to be one; in a weighted graph each edge has a 
positive weight.  
We are interested in which edge-weights to increase in order to increase the communicability 
of a graph the most, or which edges to add to or remove from a graph to achieve 
a significant increase or decrease, respectively, of the communicability. Our choice of which 
edge-weights to change, or which edges to add or remove, is based on the sensitivity of the 
communicability to changes in the edge-weight. We therefore investigate this sensitivity. 
Our approach is compared to some available approaches. Both undirected and directed 
graphs are considered.

We remark that graphs that have more than one connected component are deficient in their 
communicability, because not every node can communicate with every other node of the 
graph. 
Unless otherwise stated, we will assume the graphs under consideration to be 
connected. (For networks with more than one connected component, the results here can
be applied separately to each component.)

This paper is organized as follows: Section \ref{basicdef} defines basic concepts about 
graphs. Notions of communicability are reviewed and a new one is introduced in Section 
\ref{NOC}. Computed illustrations for some small graphs are presented in Section 
\ref{examples}. Section \ref{lsn} describes numerical methods for estimating the 
sensitivity of the total network communicability, and the sensitivity of the Perron network 
communicability to changes in the weights, for large-scale networks. Section 
\ref{largeexamples} presents a few computed results for large-scale networks. 
Concluding remarks are provided in Section \ref{conclusion}.

\section{Basic Definitions}\label{basicdef}
Networks are represented mathematically by graphs. The basic theory of graphs can
be found in many textbooks; see, e.g., \cite{estrada2011structure,newman2010} for
introductions focused on applications to the study of networks,
and \cite{biggs1993} for a deeper discussion of the matrices associated to a graph.
Below we will briefly state the definitions we need in order to fix the notation.

A \emph{weighted graph} 
$\mathcal{G}=\langle\mathcal{V},\mathcal{E},\mathcal{W}\rangle$ consists of a set of 
\emph{vertices} or \emph{nodes} $\mathcal{V}=\{v_1,v_2,\dots,v_n\}$, a set of \emph{edges} 
$\mathcal{E}=\{e_1,e_2,\dots,e_m\}$ that connect the nodes, and a 
map $\mathcal{W}$ that assigns to each edge a \emph{weight}, which for the purposes of
this article always will be a positive real number.
An edge $e_k$ is said to be \emph{directed} if it 
starts at a vertex $v_i$ and ends at a vertex $v_j$. This edge is denoted by
$e(v_i\rightarrow v_j)$ and has the associated \emph{weight} $w_{ij}>0$. If there also is 
an edge $e(v_j\rightarrow v_i)$ with the same weight $w_{ji}=w_{ij}$, then we may identify
the directed edges $e(v_i\rightarrow v_j)$ and $e(v_j\rightarrow v_i)$ with an 
\emph{undirected} edge with weight $w_{ij}$; we denote undirected edges by 
$e(v_i\leftrightarrow v_j)$. A graph with only
undirected edges is said to be \emph{undirected}; otherwise the graph is \emph{directed}. 
In this work, we will consider only graphs without multiple edges or self-loops.
The \emph{adjacency matrix} for a graph $\mathcal{G}$ is the
matrix $A=[w_{ij}]_{i,j=1}^n\in\R^{n\times n}$, whose entries are the edge-weights; if 
there is no edge $e(v_i\rightarrow v_j)$ in $\mathcal{G}$, then $w_{ij}=0$. For an 
\emph{unweighted graph}, all positive entries $w_{ij}$ of $A$ equal one. When 
$\mathcal{G}$ is undirected, then $A$ is symmetric.

A sequence of edges (not necessarily distinct) such that
$\{e(v_1\rightarrow v_2),e(v_2\rightarrow v_3),\ldots,e(v_k\rightarrow v_{k+1})\}$ form a
\emph{walk} of length $k$. If $v_{k+1}=v_1$, then the walk is said to be \emph{closed}. For 
further discussions on networks and graphs; see \cite{estrada2011structure,newman2010}.

\section{Notions of Communicability}\label{NOC}
There are many measures of communicability of a network. For instance, the diameter of the
graph that represents a network provides a measure of how easy it is for the nodes of the 
graph to communicate. We recall that for an unweighted graph, its diameter is the maximal 
length of the shortest path between any pair of distinct nodes of the graph. The 
definition has to be adjusted for weighted graphs. 
We will not use the diameter in this paper, because as a ``worst case'' measure it is fairly
crude. For instance, let ${\mathcal G}$ be an unweighted complete graph with 
$n\geq 4$ nodes from which one edge is removed. Then the diameter of ${\mathcal G}$ 
is $2$. More edges can be removed so that the diameter remains $2$. In fact, one can 
remove any $1$ or $2$ edges from a complete graph with $4$ nodes and then obtains a 
graph with diameter $2$.

This paper focuses on the total network communicability, which is defined with the aid of 
the exponential of the adjacency matrix, and on the Perron network communicability, which 
is defined with the Perron root and the right and left Perron vectors of the adjacency 
matrix. This section discusses these communicability measures and their sensitivity 
to changes in the weights that define the adjacency matrix. Small examples that illustrate
the performance of these measures are presented.

\subsection{The modified matrix exponential and network communicability}\label{MENC}
Consider an unweighted graph $\mathcal{G}$ with adjacency matrix $A\in\R^{n\times n}$. 
Then the $(ij)^{th}$ entry of the matrix $A^k$ counts the number of walks of length $k$
between the vertices $v_i$ and $v_j$. For weighted graphs, the interpretation of the 
$(ij)^{th}$ entry of the matrix $A^k$ has to be modified. A \emph{matrix function} that is
analytic at the origin and vanishes there can be defined by a formal Maclaurin series
\begin{equation}\label{matfun}
f(A)=\sum_{k=1}^{\infty}c_kA^k.
\end{equation}
For the moment  we ignore the convergence properties of this series. Long 
walks are usually considered less important than short walks, because information flows more 
easily through short walks than through long ones. Therefore, matrix functions applied in 
network analysis generally have the property that $0\leq c_{k+1}\leq c_k$ for all 
sufficiently large $k$.
The possibly most common matrix function used in network analysis is the matrix 
exponential; see \cite{estrada2011structure,estrada2005subgraph} for discussions and 
illustrations. We prefer to use the modified matrix exponential
\begin{equation}\label{modexp}
\exp_0(A):=\exp(A)-I, 
\end{equation}
where $I$ denotes the identity matrix, because the first term in the Maclaurin series of 
$\exp(A)$ has no natural interpretation in the context of network modeling. For the 
modified matrix exponential, we have $c_k=1/k!$ for $k\geq 1$, and the series 
\eqref{matfun} converges for any adjacency matrix $A$. The quantity $[\exp_0(A)]_{ii}$ is
commonly referred to as the \emph{subgraph centrality} of the vertex $v_i$; it measures 
the ease of leaving node $v_i$ and returning to this node by following the edges of the 
graph; see \cite{estrada2011structure,estrada2005subgraph}, though we remark that these 
references apply the matrix exponential instead of \eqref{modexp}. The subgraph centrality 
is an appropriate measure for undirected graphs; a discussion about directed graphs is 
provided in \cite{dlcmr1}.

The communicability between distinct vertices $v_i$ and $v_j$, $i\ne j$, is defined by
\[
[\exp_0(A)]_{ij}=\sum_{k=1}^{\infty}\frac{[A^k]_{ij}}{k!};
\]
see \cite{estrada2011structure,estrada2008} for the analogous definition based on 
$\exp(A)$. It 
accounts for all the possible routes of communication between the vertices $v_i$ and $v_j$
in the network defined by the adjacency matrix $A$, and assigns more weight to shorter 
routes than to longer ones. The larger the value of $[\exp_0(A)]_{ij}$, the better is the 
communicability between the vertices $v_i$ and $v_j$. 

We measure how effectively information can be transmitted across the whole network by the 
\emph{total network communicability}, 
\begin{equation}\label{eq1}
C^{\rm TN}(A)=\mathbf{1}^T \exp_0(A) \mathbf{1}, 
\end{equation}
where $\mathbf{1}=[1,1,\ldots,1]^T\in\R^n$. Benzi and Klymko \cite{benzi2013total} defined 
the total network communicability with $\exp(A)$; this yields values that are $n$ larger 
than the values obtained with \eqref{eq1}.

We are interested in determining the sensitivity of the total network communicability to 
changes in the weight $w_{ij}$ of the edge $e(v_i\rightarrow v_j)$. Therefore, we compute 
the partial derivative of $C^{\rm TN}(A)$ with respect to $w_{ij}$. For that, we introduce the 
Fr\'{e}chet derivative $L(A,E_{ij})$ of the modified matrix exponential $\exp_0(A)$ with 
respect to the direction $E_{ij}=\mathbf{e}_i\mathbf{e}_j^T$, where 
$\mathbf{e}_k=[0,\ldots,0,1,0,\ldots,0]^T\in\R^n$ denotes the $k$th axis vector, given by
\begin{equation}\label{eq2a} 
L(A,E_{ij})= \lim_{t\rightarrow0}\frac{\exp_0(A+tE_{ij})-\exp_0(A)}{t};
\end{equation}
see, e.g., \cite{higham2008functions,OR}. Then
\[
\frac{\partial C^{\rm TN}(A)}{\partial w_{ij}} = 
\lim_{t\rightarrow0}\frac{C^{\rm TN}(A+tE_{ij})-C^{\rm TN}(A)}{t} =
 \mathbf{1}^T \cdot L(A,E_{ij})\cdot \mathbf{1}
\]
shows the rate of change of the total network communicability between the vertices $v_i$ 
and $v_j$ in direction $E_{ij}$ due to a change in the edge-weight $w_{ij}$. 

\begin{definition}\label{def1}
Let $\mathcal{G}=\{\mathcal{V},\mathcal{E},\mathcal{W}\}$ be a graph with
adjacency matrix $A=[w_{ij}]\in\R^{n\times n}$, where $w_{ij}>0$ if there is an edge 
$e(v_i\rightarrow v_j)$ in $\mathcal{G}$, and $w_{ij}=0$ otherwise. We define the 
\emph{total network sensitivity with respect to the weight} $w_{ij}$ as
\begin{equation}\label{eq2}
S^{\rm TN}_{ij}(A)= \mathbf{1}^T \cdot L(A,E_{ij})\cdot \mathbf{1},
\end{equation}
as well as the \emph{total network sensitivity} as
\[
S^{\rm TN}(A)=\sum_{i,j=1}^n S^{\rm TN}_{ij}(A).
\]
\end{definition}

The total network sensitivity with respect to the weight $w_{ij}$ shows the rate of change
of $C^{\rm TN}(A)$ with respect to a change in the edge-weight $w_{ij}$. The following 
result follows from Higham \cite[Theorem 3.6]{higham2008functions}.

\begin{proposition}\label{thm1}
Let f be $2n-1$ times continuously differentiable in a connected open set $\Omega$ in the 
complex plane containing the origin, and assume that the spectrum of the matrix 
$A\in\R^{n\times n}$ is in $\Omega$. Then the Fr\'{e}chet derivative $L(A,E_{ij})$ exists
and satisfies 
\begin{equation}\label{thm1result}
f\left({\begin{bmatrix}
A & E_{ij} \\
0 & A \\
\end{bmatrix}}\right)
=\begin{bmatrix}
f(A) & L(A,E_{ij}) \\
0 & f(A)\\
\end{bmatrix}.
\end{equation}
\end{proposition}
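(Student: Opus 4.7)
The plan is to reduce the statement to the polynomial case by a Hermite-interpolation argument, since every matrix function can be represented as a polynomial in the matrix whose values and appropriate derivatives match $f$ on the spectrum.

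First, I would verify the identity for the monomial $f(x) = x^k$ by induction on $k$. A straightforward block multiplication shows
\[
\begin{bmatrix} A & E_{ij} \\ 0 & A \end{bmatrix}^k = \begin{bmatrix} A^k & \sum_{\ell=0}^{k-1} A^\ell E_{ij} A^{k-1-\ell} \\ 0 & A^k \end{bmatrix},
\]
and the upper-right block is exactly the Fr\'echet derivative of $x \mapsto x^k$ at $A$ in the direction $E_{ij}$, which one can verify from the definition \eqref{eq2a} by expanding $(A+tE_{ij})^k$ and collecting the $O(t)$ term. By linearity in $f$, both the block-matrix identity and the formula for $L$ extend to every polynomial $p$, so
\[
p\!\left(\begin{bmatrix} A & E_{ij} \\ 0 & A \end{bmatrix}\right)=\begin{bmatrix} p(A) & L_p(A,E_{ij}) \\ 0 & p(A)\end{bmatrix}.
\]

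Next, I would pass from polynomials to the general $f$. The block matrix $M=\begin{bmatrix} A & E_{ij} \\ 0 & A \end{bmatrix}\in\R^{2n\times 2n}$ is block upper-triangular, so its spectrum (with algebraic multiplicity) equals that of $A$ with each eigenvalue doubled, and an elementary Jordan-block analysis shows that the size of the largest Jordan block of $M$ at any eigenvalue is at most twice that of $A$, hence at most $2n$. Consequently $f(M)$ is well defined whenever $f$ has $2n-1$ continuous derivatives on a neighborhood of the spectrum of $A$, and one can choose a Hermite interpolating polynomial $p$ that agrees with $f$ (together with sufficiently many derivatives) on the spectrum of $M$. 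For such $p$ one has $p(M)=f(M)$, $p(A)=f(A)$, and, crucially, $L_p(A,E_{ij})=L(A,E_{ij})$, because the Fr\'echet derivative of a matrix function at $A$ also depends only on the values of $f$ and its derivatives on the spectrum of $A$ (again up to the largest Jordan-block size of $M$, which controls the order of derivative appearing through the integral/power-series representation of $L$). Substituting into the polynomial identity yields \eqref{thm1result}, and also establishes existence of $L(A,E_{ij})$.

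The main obstacle I expect is the bookkeeping in the last step: verifying that $2n-1$ derivatives really suffice, i.e., that the Hermite interpolation data on the spectrum of $M$ determines both $f(M)$ and $L(A,E_{ij})$. This is where Higham's framework is used; rather than re-deriving it, I would cite \cite[Thm.~3.6]{higham2008functions} for the matching of the Fr\'echet derivative with the upper-right block and for the precise smoothness requirement, having already made the mechanism transparent through the polynomial computation above.
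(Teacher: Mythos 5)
The paper does not actually prove this proposition; it is stated as a direct consequence of Higham \cite[Theorem 3.6]{higham2008functions}. Your proposal reconstructs the standard argument that underlies that theorem (monomial $\to$ polynomial by linearity $\to$ general $f$ by Hermite interpolation on $\sigma(M)$), and the polynomial computation you give — the block power formula and the identification of the off-diagonal block with $\sum_{\ell=0}^{k-1}A^\ell E_{ij}A^{k-1-\ell}$ — is correct and is indeed the mechanism behind the result. So in substance your route and the paper's route coincide: both ultimately rest on the Higham reference.

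There is, however, a circularity in the final step as written that should be flagged. You argue that $L_p(A,E_{ij})=L(A,E_{ij})$ ``because the Fr\'echet derivative of a matrix function at $A$ also depends only on the values of $f$ and its derivatives on the spectrum.'' That fact is true \emph{once you know $L(A,E_{ij})$ exists}, but existence is part of what the proposition asserts and is precisely what you claim to establish at the end (``...and also establishes existence of $L(A,E_{ij})$''). You cannot invoke properties of an object to prove that the object exists. The non-circular version is: let $\widetilde L$ denote the upper-right block of $f(M)$; from the polynomial identity $\widetilde L=L_p(A,E_{ij})$ for any Hermite interpolant $p$ of $f$ on $\sigma(M)$, so $\widetilde L$ is well defined; one must then show directly that $f(A+tE_{ij})-f(A)-t\widetilde L=o(t)$, which requires controlling how the interpolation data for $A+tE_{ij}$ varies with $t$ and uses the $C^{2n-1}$ hypothesis. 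That last verification is the genuine content of Higham's Theorem~3.6, and since you explicitly defer to that reference for it, the gap is acknowledged rather than fatal — but the phrasing ``$L_p=L$ because $L$ depends only on the spectrum'' should be replaced by the define-then-verify argument above (or simply by the citation, as the paper does).

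One more small point: your statement that the largest Jordan block of $M$ has size at most twice that of $A$ is correct, and the short verification is worth including since it justifies the $2n-1$ in the hypothesis: if $(A-\lambda I)^p=0$ on the relevant invariant subspace, then $(M-\lambda I)^{2p}$ has vanishing diagonal blocks and off-diagonal block $\sum_{\ell=0}^{2p-1}(A-\lambda I)^\ell E_{ij}(A-\lambda I)^{2p-1-\ell}$, each term of which contains a factor $(A-\lambda I)^q$ with $q\ge p$.
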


We are interested in the special case of Proposition \ref{thm1} when $f(t)=\exp(t)-1$. Then 
the Fr\'{e}chet derivative $L(A,E_{ij})$ in \eqref{thm1result} is defined by \eqref{eq2a}. 
One has 
\begin{equation}\label{defL}
\exp_0(A+t E_{ij}) - \exp_0(A) - t L(A, E_{ij})= O(t^2)\mbox{  as  }
t\rightarrow 0.
\end{equation}

\subsection{Perron network communicability}\label{MP}
Let $A=[w_{ij}]\in\R^{n\times n}$ be a nonnegative irreducible adjacency matrix for a 
graph and let $\rho$ be its Perron root (see, e.g., \cite{horn1985matrix} for a full
treatment of the Perron--Frobenius Theorem). Then there are unique right and left real 
eigenvectors $\mathbf{x}=[x_1,x_2,\ldots,x_n]^T\in\R^n$ and 
$\mathbf{y}=[y_1,y_2,\ldots,y_n]^T\in\R^n$, respectively, of unit Euclidean norm with 
positive entries associated with $\rho$, i.e., 
\begin{equation}\label{pvects}
A\mathbf{x}=\rho\mathbf{x},\qquad \mathbf{y}^TA=\rho\mathbf{y}^T.
\end{equation}
We recall that a node $v_i$ is referred to as a \emph{sink} if there are no edges pointing
from this node to any other node, and a node $v_i$ is said to be a \emph{source} when 
there are no edges from other nodes pointing to $v_i$. Undirected edges should be 
considered as ``two-way streets'' in this context. A directed graph is said to be 
\emph{strongly connected} if every vertex $v_i$ can be reached by any other vertex $v_j$,
$j\ne i$, by following edges in their direction. It is well known that an adjacency matrix
is irreducible if and only if the associated graph is strongly connected; see, e.g.,
\cite{horn1985matrix}. In particular, a graph with a sink or source is not strongly 
connected, and the adjacency matrix $A$ for such a graph is reducible. A common
approach to obtain a nearby irreducible adjacency matrix is to make a small positive 
perturbation of every entry of $A$. For instance, we may replace $A$ by the irreducible 
adjacency matrix $\hat{A}=A+\delta A\in\R^{n\times n}$, where $\delta A$ is a small multiple of 
the matrix $\mathbf{1}\mathbf{1}^T$ and $\mathbf{1}=[1,1,\ldots,1]^T\in\R^n$. This makes 
the matrix $\hat{A}=A+\delta A$ irreducible with $\|\hat{A}\|_2\approx\|A\|_2$. Here and 
throughout this paper, $\|\cdot\|_2$ denotes the spectral matrix norm or the Euclidean 
vector norm. In this section we will assume that, in case the given adjacency matrix $A$ 
is reducible, it is modified in this manner.  We may therefore assume that the right and 
left Perron vectors are unique up to scaling.
 
Let $C\in\R^{n\times n}$ be a nonnegative matrix such that $\| C \|_2=1$, and let 
$\varepsilon>0$ be small. Denote the Perron root of 
$A+\varepsilon C$ by $\rho+\delta\rho$. Then
\[
\delta\rho=\varepsilon\frac{\mathbf{y}^TC\mathbf{x}}{\mathbf{y}^T\mathbf{x}}+
O(\varepsilon^2);
\]
see \cite{milanese2010approximating}. Moreover,
\begin{equation}\label{condrho}
\frac{\mathbf{y}^T C \mathbf{x}} {\mathbf{y}^T\mathbf{x}}=
\frac{|\mathbf{y}^T C \mathbf{x}|} {\mathbf{y}^T\mathbf{x}}\leq
\frac{\|\mathbf{y}\|_2\|C\|_2\|\mathbf{x}\|_2} {\mathbf{y}^T\mathbf{x}}=
\frac{1}{\cos\theta},
\end{equation}
where $\theta$ is the angle between $\mathbf{x}$ and $\mathbf{y}$. The quantity 
$1/\cos\theta$ is referred to as the \emph{condition number} of $\rho$ and denoted by 
$\kappa(\rho)$; see Wilkinson \cite[Chapter 2]{Wi}. Equality is attained in 
\eqref{condrho} for $C=\mathbf{y}\mathbf{x}^T$. 

Consider increasing the entry $w_{ij}$, $i\ne j$, of $A$ slightly by $\varepsilon>0$. This 
corresponds to increasing the weight $w_{ij}$ of an existing edge $e(v_i\rightarrow v_j)$ 
by $\varepsilon$, or to introducing a new edge $e(v_i\rightarrow v_j)$ with weight 
$\varepsilon$. The corresponding matrix $C$ is given by 
\begin{equation}\label{Cij}
C=E_{ij}=\mathbf{e}_i\mathbf{e}_j^T.
\end{equation}
The impact on the Perron root of the change $\varepsilon C$ of $A$ is 
\[
\delta\rho=\varepsilon\frac{y_ix_j}{\mathbf{y}^T\mathbf{x}}+O(\varepsilon^2).
\]

We are interested in choosing a perturbation $\varepsilon C$ of $A$ with $C$ of the form 
\eqref{Cij}, so that $\rho$ is increased as much as possible. The above relation shows 
that we should choose $i$ and $j$ so that
\[
x_j=\max_{1\leq k\leq n}x_k, \qquad y_i=\max_{1\leq k\leq n}y_k.
\]
Recall that $\mathbf{x}$ and $\mathbf{y}$ are unit vectors. Therefore, $x_j=1$ and $y_i=1$
implies that $x_k=0$ for all $k\ne j$ and $y_k=0$ for all $k\ne i$. Then
$C=E_{ij}=\mathbf{y}\mathbf{x}^T$ and the maximum perturbation of the Perron root is 
$\delta\rho=\varepsilon\kappa(\rho)$. 

We define the \emph{Perron root sensitivity} with respect to the direction
$E_{ij}=\mathbf{e}_i\mathbf{e}_j^T$ as 
\[
S^{\rm PR}_{ij}(A)=\frac{y_ix_j}{\mathbf{y}^T\mathbf{x}},
\]
as well as the \emph{Perron root sensitivity matrix}
\[
S^{\rm PR}(A)= \bigg[S^{\rm PR}_{ij}(A)\bigg]_{i,j=1}^n=
\frac{\mathbf{y}\mathbf{x}^T}{\mathbf{y}^T\mathbf{x}}\in\R^{n\times n}.
\]
Finally, we introduce the \emph{Perron network communicability}, 
\begin{equation}\label{CPN}
C^{\rm PN}(A)=\exp_0(\rho)\,\mathbf{1}^T\mathbf{x}\mathbf{y}^T\mathbf{1}=
\exp_0(\rho)\left(\sum_{j=1}^nx_j\right)\left(\sum_{j=1}^ny_j\right),
\end{equation}
which is analogous to the total network communicability \eqref{eq1}, but is easier to 
compute for a large adjacency matrix $A$. Let $\|\cdot\|_1$ denote the vector $1$-norm. 
Since 
\[
\sum_{j=1}^nx_j=\|\mathbf{x}\|_1\leq n^{1/2}\|\mathbf{x}\|_2=n^{1/2},\qquad
\sum_{j=1}^ny_j=\|\mathbf{y}\|_1\leq n^{1/2}\|\mathbf{y}\|_2=n^{1/2},
\]
we have the bound
\[
C^{\rm PN}(A) \leq n \exp_0 (\rho).
\]
In general, we expect the Perron network communicability to increase the most when 
increasing the edge-weight that makes the Perron root change the most. 

An alternative way to study the sensitivity of the Perron network communicability is to 
determine the first-order partial derivative of $C^{\rm  PN}(A)$ with respect to $w_{ij}$.
Introduce the Fr\'{e}chet derivative $L^{\rm PN}(A,E_{ij})$ of the matrix function 
$\exp_0(\rho)\mathbf{x}\mathbf{y}^T$ with respect to the direction 
$E_{ij}=\mathbf{e}_i\mathbf{e}_j^T$, i.e.,
\[
L^{\rm PN}(A,E_{ij})=\lim_{t\rightarrow0}
  \frac{\exp_0(\hat{\rho}(t))\hat{\mathbf{x}}(t)\hat{\mathbf{y}}(t)^T-
  \exp_0(\rho)\mathbf{x}\mathbf{y}^T}{t},
\]
where $\rho$, $\mathbf{x}$, and $\mathbf{y}$ are the Perron root, and the right and left 
Perron vectors of $A$, respectively, and $\hat{\rho}(t)$, $\hat{\mathbf{x}}(t)$, and 
$\hat{\mathbf{y}}(t)$ are the Perron root, and the right and left Perron vectors of 
$A+tE_{ij}$. Then
\[
\frac{\partial C^{\rm PN}(A)}{\partial w_{ij}} =  
\lim_{t\rightarrow0}\frac{C^{\rm PN}(A+tE_{ij})-C^{\rm PN}(A)}{t} =
\mathbf{1}^T \cdot L^{\rm PN}(A,E_{ij})\cdot \mathbf{1}
\]
is the rate of change of the Perron network communicability between the vertices $v_i$ and
$v_j$ in the direction $E_{ij}$ due to a change in $w_{ij}$. For the examples shown in the
following sections, we choose $t=2\cdot10^{-5}$ when computing $L^{\rm PN}(A,E_{ij})$ unless
explicitly stated otherwise.

\begin{definition}\label{def2}
Let $\mathcal{G}=\{\mathcal{V},\mathcal{E},\mathcal{W}\}$ be a graph with
adjacency matrix $A=[w_{ij}]\in\R^{n\times n}$, where $w_{ij}>0$ if there is an edge 
$e(v_i\rightarrow v_j)$ in $\mathcal{G}$, and $w_{ij}=0$ otherwise. We define the 
\emph{Perron network sensitivity with respect to the weight} $w_{ij}$ as
\begin{equation}\label{peq2}
S^{\rm PN}_{ij}(A)= \mathbf{1}^T \cdot L^{\rm PN}(A,E_{ij})\cdot \mathbf{1},
\end{equation}
as well as the \emph{Perron network sensitivity} as
\[
S^{\rm PN}(A)=\sum_{i,j=1}^n S^{\rm PN}_{ij}(A).
\]
\end{definition}

The Perron network sensitivity with respect to the weight $w_{ij}$ shows the rate of 
change of $C^{\rm PN}(A)$ with respect to a change in the edge-weight $w_{ij}$. We are 
interested in investigating how $C^{\rm PN}(A)$ relates to $C^{\rm TN}(A)$. Assume first 
that the spectral factorizations
\begin{equation}\label{spf}
A=X\Lambda X^{-1},\quad A^T=\widetilde{Y}\Lambda\widetilde{Y}^{-1}
\end{equation}
exist, where $\Lambda={\rm diag}[\rho,\lambda_2,\ldots,\lambda_n]$ and the columns of the
eigenvector matrix $X=[\mathbf{x},\mathbf{x}_2,\ldots,\mathbf{x}_n]$ are scaled to be of
unit Euclidean norm. We remark that matrices with a spectral factorization are dense among
all matrices in $\R^{n\times n}$. We may choose $\widetilde{Y}=
[\widetilde{\mathbf{y}},\widetilde{\mathbf{y}}_2,\ldots,\widetilde{\mathbf{y}}_n]=X^{-T}$.
Then $A=X\Lambda\widetilde{Y}^T$. 
The vector $\widetilde{\mathbf{y}}$ is a rescaling of the left Perron vector $\mathbf{y}$
in \eqref{pvects}. This normalization of the columns of $\widetilde{Y}$ yields 
$\widetilde{\mathbf{y}}^T\mathbf{x}=1$ and $\widetilde{\mathbf{y}}_j^T\mathbf{x}_j=1$
for $j=2,3,\ldots,n$. In particular, this implies
\[
1=\widetilde{\mathbf{y}}^T\mathbf{x} =\|\mathbf{x}\| \|\widetilde{\mathbf{y}}\| \cos \theta, 
\quad\mbox{i.e.,}\quad \|\widetilde{\mathbf{y}}\|=\frac{1}{\cos\theta}=\kappa(\rho).
\]
Assume that $\rho$ is significantly larger than $|\lambda_j|$ for $j=2,3,\ldots,n$. Then
\begin{eqnarray} 
\nonumber
C^{\rm TN}(A)&=&\mathbf{1}^TX\exp_0(\Lambda)\widetilde{Y}^T\mathbf{1}\\
\nonumber
&=& \exp_0(\rho)\mathbf{1}^T\mathbf{x}\widetilde{\mathbf{y}}^T\mathbf{1}+
\sum_{j=2}^n \exp_0(\lambda_j)\mathbf{1}^T\mathbf{x}_j\widetilde{\mathbf{y}}_j^T\mathbf{1}\\
\label{CTNA}
&=& \kappa(\rho)C^{\rm PN}(A)+ 
\sum_{j=2}^n \exp_0(\lambda_j)\mathbf{1}^T\mathbf{x}_j\widetilde{\mathbf{y}}_j^T\mathbf{1}\\
\nonumber
&\approx&\kappa(\rho)C^{\rm PN}(A).
\end{eqnarray}
Thus, the total network communicability depends on the conditioning of the Perron root. 
For a general matrix $A$, we can use the bounds $|\mathbf{1}^T\mathbf{x}_j|\leq n^{1/2}$ 
and $|\widetilde{\mathbf{y}}_j^T\mathbf{1}|\leq\|\widetilde{\mathbf{y}}_j\|_1$ in 
\eqref{CTNA}. When the graph that defines $A$ is undirected, the matrix $A$ is symmetric, 
and we can let $\widetilde{Y}=X$ be orthogonal. In this case, we have $\kappa(\rho)=1$ and
can use 
\[
|\mathbf{1}^T\mathbf{x}_j\widetilde{\mathbf{y}}_j^T\mathbf{1}|\leq n,\qquad j=2,3,\ldots,n,
\]
in \eqref{CTNA}.

If the matrix $A$ does not have the factorizations \eqref{spf}, then an analogous argument
can be made using the Jordan canonical form, where we use the fact that the right and left 
Perron vectors exist and are unique up to scaling also in this situation.

We finally remark that both the computation of the total network sensitivity 
$S^{\rm TN}(A)$ and of the Perron network sensitivity $S^{\rm PN}(A)$ for a graph with
$n$ nodes requires the evaluation of $n^2$ Fr\'echet derivatives. This makes the 
evaluation of these quantities expensive for networks with many nodes. The evaluation
of the Perron root sensitivity matrix $S^{\rm PR}(A)$ typically is much cheaper for 
large networks. We will return to this issue below.

\subsection{Examples with small networks}\label{examples}
This section describes a few small examples that illustrate the use of Fr\'echet
derivatives  
and the Perron root sensitivity. The computational effort required to 
compute all Fr\'echet derivatives for large-scale networks can be significant. How to 
reduce the computational effort for large-scale networks is discussed in Section \ref{lsn}.

In the first example, which is a small weighted graph, we compute the total network 
sensitivity and the Perron network sensitivity with respect to the weights $w_{ij}$ for 
$i \neq j$ to decide which edge-weight should be increased to enhance the total network 
communicability or the Perron network communicability as much as possible. We study 
the Perron root sensitivity by computing the left and right Perron vectors of $A$. The 
latter computations suggest which weight should be increased. Our second example 
differs from the first one in that the graph is unweighted and directed. 

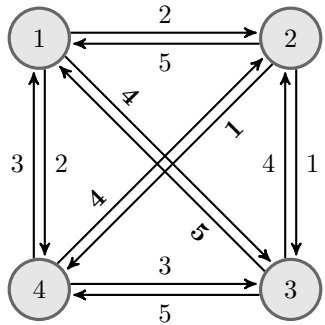
\begin{figure}[H]
\begin{center}
\begin{tikzpicture}[
 roundnode/.style={circle, draw=black!60, fill=black!10, very thick, minimum size=8mm},
 ->,>=stealth',shorten >=1pt,node distance=2.5cm,auto,thick]

\node[roundnode]      (node1)                                                   {$1$};
\node[roundnode]      (node2)       [right=2.5cm of node1]           {$2$};
\node[roundnode]      (node3)       [below=2.5cm of node2]         {$3$};
\node[roundnode]      (node4)       [below=2.5cm of node1]         {$4$};

\draw[->] (node1.10) --node[above] {2}  (node2.170);
\draw[->] (node2.-170) --node[below] {5} (node1.-10);
\draw[->] (node1.-35) --node[above, sloped, near start] {\textbf{4}}  (node3.125);
\draw[->] (node3.145) --node[below, sloped, near start] {\textbf{5}}  (node1.-55);
\draw[->] (node1.-80) --node[right] {2}  (node4.80);
\draw[->] (node4.100) --node[left] {3}  (node1.-100);
\draw[->] (node2.-80) --node[right] {1}  (node3.80);
\draw[->] (node3.100) --node[left] {4}  (node2.-100);
\draw[->] (node4.55) --node[above, sloped, near start] {\textbf{4}}  (node2.-145);
\draw[->] (node2.-125) --node[below, sloped, near start] {\textbf{1}}  (node4.35);
\draw[->] (node4.10) --node[above] {3}  (node3.170);
\draw[->] (node3.-170) --node[below] {5}  (node4.-10);
\end{tikzpicture}
\caption{Graph of Example 3.1. The edge-weights are marked next to the edges.}
\label{fig1}
\end{center}
\end{figure}

\begin{example}\label{ex1}
Consider the weighted graph of Figure \ref{fig1} with associated adjacency matrix 
\begin{equation}\label{adj1}
A=\begin{bmatrix} 
0 & 2 & 4 & 2  \\ 
5 & 0 & 1 & 1  \\ 
5 & 4 & 0 & 5  \\ 
3 & 4 & 3 & 0 
\end{bmatrix}.
\end{equation}
The corresponding Perron root sensitivity matrix of $A$ is
\begin{equation}\label{PRSmat}
S^{\rm PR}(A)=\begin{bmatrix}
0.2956 & 0.2339 & 0.4241 & 0.3250 \\
0.2336 & 0.1848 & 0.3352 & 0.2568 \\
0.2109 & 0.1669 & 0.3026 & 0.2319 \\
0.1973 & 0.1562 & 0.2832 & 0.2170 \\
\end{bmatrix},
\end{equation}
which shows that the Perron root sensitivity $S^{\rm PR}_{ij}$ is maximized for 
$\{i,j\}=\{1,3\}$. This indicates that the Perron root is increased the most when 
increasing the weight $w_{13}$ of the edge  $e(v_1\rightarrow v_3)$. We expect the Perron
network communicability to increase the most when increasing the edge-weight that makes 
the Perron root change the most. Table \ref{table1} confirms this. The table displays 
the total network sensitivity and the Perron network sensitivity with respect to changes 
in the weights $w_{ij}$ for $i \neq j$, as well as the total network communicability and 
the Perron network communicability of the graph obtained when increasing each weight 
$w_{ij}$, $i\ne j$, by one. The table shows the total network communicability to increase 
the most by increasing the weight of the edge with the largest total network sensitivity 
$S^{\rm TN}_{ij}$, i.e., weight $w_{13}$.  Increasing this weight, which also is 
associated with the largest Perron network sensitivity $S^{\rm PN}_{13}$, gives the 
largest Perron network communicability, $C^{\rm PN}(A+E_{13})$. Thus, if the edges 
represent roads, the weights represent the width of each road, and we would like to
increase the communicability the most by widening one road, then we should widen the road 
represented by the edge $e(v_1\rightarrow v_3)$. 

\begin{table}[H]
\renewcommand{\arraystretch}{1.15}
\begin{center}
\begin{tabular}{cccccc}
\hline  
$\{i,j\}$ &   $S^{\rm TN}_{ij}$   &   $C^{\rm TN}(A+E_{ij})$    &   $\{i,j\}$    & $S^{\rm PN}_{ij}$ &   $C^{\rm PN}(A+E_{ij})$ \\
\hline
$\{1,3\}$ & 22615  & 82269   & $\{1,3\}$      &  22781  &   79872    \\
$\{2,3\}$ & 18221  & 76339   & $\{2,3\}$      &  18247  &   73711    \\
$\{1,4\}$ & 17662  & 75511   & $\{1,4\}$      &  17577  &   72722    \\
$\{4,3\}$ & 15611  & 73124   & $\{4,3\}$      &  15009  &   69720    \\
$\{2,4\}$ & 14225  & 71324   & $\{2,4\}$      &  14078  &   68481   \\
$\{1,2\}$ & 13151  & 70097   & $\{1,2\}$      &  12411  &   66543    \\
$\{2,1\}$ & 12957  & 69606   & $\{2,1\}$      &  12394  &   66250   \\
$\{3,4\}$ & 12883  & 69588   & $\{3,4\}$      &  12134  &   66022   \\
$\{3,1\}$ & 11734  & 68303   & $\{3,1\}$      &  10666  &   64389   \\
$\{4,1\}$ & 11098  & 67434   & $\{4,1\}$       &  10188  &   63702   \\
$\{3,2\}$ & 9585    & 65627   & $\{3,2\}$       &  8562    &   61789   \\
$\{4,2\}$ & 9063    & 65011   & $\{4,2\}$       &  8176    &   61329   \\
\hline
\end{tabular}\\
\vspace{0.05in}
\end{center} 
\caption{Example \ref{ex1}: The total network sensitivity $S^{\rm TN}_{ij}$ and the Perron
network sensitivity $S^{\rm PN}_{ij}$ with respect to changes in the weight $w_{ij}$, 
for $i\neq j$, along with  the total network communicability and the Perron network communicability 
when the weight $w_{ij}$ of the edge $e(v_i\rightarrow v_j)$ is increased by one.}
\label{table1}
\end{table}

The total network sensitivities $S^{\rm TN}_{ij}$ and the Perron network sensitivities 
$S^{\rm PN}_{ij}$ with respect to changes in the weights $w_{ij}$ of Table \ref{table1} 
also can be used to assess which weight(s) to decrease to reduce the total network 
communicability or the Perron network communicability of the network of Figure \ref{fig1} 
the most. The fact that $S^{\rm TN}_{13}$ and $S^{\rm PN}_{13}$ are the largest sensitivities suggests 
that we should reduce the weight $w_{13}$ to reduce the total network communicability and 
the Perron network communicability the most. Furthermore, the Perron root sensitivity 
matrix \eqref{PRSmat} suggests that both the Perron network communicability \eqref{CPN} and the 
Perron root will decrease the most when decreasing the weight $w_{13}$. Indeed, tabulating 
$C^{\rm TN}(A-E_{ij})$ and $C^{\rm PN}(A-E_{ij})$ for $1\leq i,j\leq 4$, $i\ne j$, shows 
$C^{\rm TN}(A-E_{13})$ and $C^{\rm PN}(A-E_{13})$ to be minimal. $~~\Box$
\end{example}

In the adjacency matrix \eqref{adj1} all off-diagonal entries are positive. This is not
important for the approach described. We can compute the total network sensitivity 
\eqref{eq2} and the Perron network sensitivity \eqref{peq2} independently of the values of the weights
$w_{ij}$. If $S^{\rm TN}_{ij}$ or $S^{\rm PN}_{ij}$ is the largest sensitivity and $w_{ij}=0$, 
then this indicates that the total network communicability or the Perron network 
communicability may be increased the most by adding the edge $e(v_i\rightarrow v_j)$ to 
the graph.

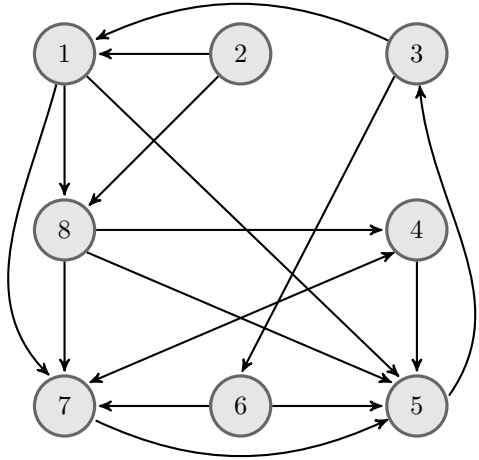
\begin{figure}[H]
\begin{center}
\begin{tikzpicture}[
 roundnode/.style={circle, draw=black!60, fill=black!10, very thick, minimum size=8mm},
 ->,>=stealth',shorten >=1pt,node distance=2.5cm,auto,thick]

\node[roundnode]      (node1)                                                      {$1$};
\node[roundnode]      (node2)       [right=1.5cm of node1]           {$2$};
\node[roundnode]      (node3)       [right=1.5cm of node2]         {$3$};
\node[roundnode]      (node4)       [below=1.5cm of node3]         {$4$};
\node[roundnode]      (node5)       [below=1.5cm of node4]           {$5$};
\node[roundnode]      (node6)       [left=1.5cm of node5]         {$6$};
\node[roundnode]      (node7)       [left=1.5cm of node6]         {$7$};
\node[roundnode]      (node8)       [below=1.5cm of node1]         {$8$};

\draw[->] (node1.-45) -- (node5.120);
\draw[->] (node1.-90) -- (node8.90);
\draw[->] (node1.-105) to [out=-105,in=135]  (node7.115);
\draw[->] (node2.180) -- (node1.0);
\draw[->] (node2.-135) -- (node8.45);
\draw[->] (node3.-135) -- (node6.90);
\draw[->] (node3.155) to [out=155,in=25] (node1.25);
\draw[<-] (node3.-85) to [out=-85,in=55]  (node5.15);
\draw[->] (node4.-90) -- (node5.90);
\draw[<->] (node4.-135) -- (node7.45);
\draw[<-] (node5.-155) to [out=-155,in=-25]  (node7.-25);
\draw[->] (node6.0) -- (node5.180);
\draw[->] (node6.180) -- (node7.0);
\draw[->] (node8.0) -- (node4.180);
\draw[->] (node8.-90) -- (node7.90);
\draw[->] (node8.-45) -- (node5.135);

\end{tikzpicture}
\caption{Graph of Example 3.2. All edges have unit weight.}
\label{fig_new}
\end{center}
\end{figure}

\begin{example} \label{ex_new}
Let $A\in\R^{8\times 8}$ be the adjacency matrix for the unweighted directed graph of 
Figure \ref{fig_new}. All its entries are either one of zero. The graph is not strongly 
connected, and therefore the matrix $A$ is reducible. To obtain an irreducible matrix 
$\hat{A}$, we add the perturbation matrix $\delta A=\delta \cdot\mathbf{1}\mathbf{1}^T$ to
$A$ for some $\delta>0$; thus, $\hat{A}=A+\delta A$. We choose the value of $\delta$ as
follows: Compute the Perron vectors for $\delta=10^{-4}$ and then reduce $\delta$ by a 
factor $10$ and determine new Perron vectors until the edge determined for two consecutive 
$\delta$-values is the same. For the present example, this gives $\delta=10^{-5}$. The 
matrix $\hat{A}=A+10^{-5}\cdot\mathbf{1}\mathbf{1}^T$ so determined is irreducible and the
right and left Perron vectors are unique up to scaling. We find the three largest entries 
of the Perron root sensitivity matrix $S^{\rm PR}(\hat{A})$ of $\hat{A}$ to be 
\[
S^{\rm PR}(\hat{A})_{5,1}=0.477305,\quad
S^{\rm PR}(\hat{A})_{5,2}=0.477298,\quad
S^{\rm PR}(\hat{A})_{5,8}=0.400601.
\]
This suggests that the Perron root may be increased the most by inserting the edge 
$e(v_5\rightarrow v_1)$ into the graph. Typically, the Perron network communicability 
is increased the most by increasing the weight for an edge (or inserting an edge) that 
results in the largest increase of the Perron root. However, as is illustrated by Table 
\ref{table_new}, this might not be the case when the largest Perron root sensitivities 
$S^{\rm PR}(\hat{A})_{i,j}$ are very close in size as in the present example, where 
$S^{\rm PR}_{5,1}$ and $S^{\rm PR}_{5,2}$ are very close. The table shows the
Perron network communicability to increase the most by adding the edge 
$e(v_5\rightarrow v_2)$. Table \ref{table_new} shows the top five total network sensitivities and Perron
network sensitivities with respect to perturbations in $w_{ij}$ for $i\neq j$, along with 
the total network communicabilities and Perron network communicabilities of the graph obtained when 
increasing each edge-weight $w_{ij}$ by one. The table shows the total network 
communicability to increase the most when adding the edge associated with the largest 
total network sensitivity $S^{\rm TN}_{ij}$. The Perron network communicability 
increases the most by inserting the edge associated with
the largest Perron network sensitivity $S^{\rm PN}_{ij}$.  $~~\Box$
\end{example}

\begin{table}[H]
\renewcommand{\arraystretch}{1.15}
\begin{center}
\begin{tabular}{cccccc}
\hline  
$\{i,j\}$ &   $S^{\rm TN}_{ij}$   &   $C^{\rm TN}(A+E_{ij})$    &$\{i,j\}$ &  $S^{\rm PN}_{ij}$ &   $C^{\rm PN}(\hat{A}+E_{ij})$ \\
\hline
$\{5,1\}$ & 16.8311  & 68.1499   & $\{5,2\}$ &  28.5369  &   58.3264    \\
$\{5,8\}$ & 16.0552  & 67.3050   & $\{5,1\}$ &  23.0099  &   56.9987    \\
$\{5,2\}$ & 14.9415  & 65.2404   & $\{5,3\}$ &  19.9219  &   51.2350    \\
$\{5,3\}$ & 14.1656  & 64.4248   & $\{5,8\}$ &  19.5778  &   53.8728   \\
$\{7,1\}$ & 13.2831  & 64.1815   & $\{7,2\}$ &  18.2576  &   50.5741   \\

\hline
\end{tabular}\\
\vspace{0.05in}
\end{center} 
\caption{Example \ref{ex_new}: The five largest total network sensitivities 
$S^{\rm TN}_{ij}$ and Perron network sensitivities $S^{\rm PN}_{ij}$ with respect to
perturbations in the weights $w_{ij}$, for $i\neq j$, along with the corresponding total 
network communicabilities and Perron network 
communicabilities when the edge-weight $w_{ij}$ is increased by one.}
\label{table_new}
\end{table}

For some graphs one can prove where to add an additional edge to maximize the total
network sensitivity. The following result provides an illustration. 

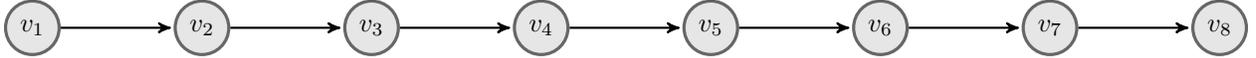
\begin{figure}[H]
\begin{center}
\begin{tikzpicture}[
roundnode/.style={circle, draw=black!60, fill=black!10, very thick, minimum size=6mm},
->,>=stealth',shorten >=1pt,node distance=2.5cm,auto,thick]

\node[roundnode]      (node1)                                                   {$v_1$};
\node[roundnode]      (node2)       [right=1.5cm of node1]      {$v_2$};
\node[roundnode]      (node3)       [right=1.5cm of node2]         {$v_3$};
\node[roundnode]      (node4)       [right=1.5cm of node3]         {$v_4$};
\node[roundnode]      (node5)       [right=1.5cm of node4]         {$v_5$};
\node[roundnode]      (node6)       [right=1.5cm of node5]         {$v_6$};
\node[roundnode]      (node7)       [right=1.5cm of node6]         {$v_7$};
\node[roundnode]      (node8)       [right=1.5cm of node7]         {$v_8$};

\draw[->] (node1.east) -- (node2.west);
\draw[->] (node2.east) -- (node3.west);
\draw[->] (node3.east) -- (node4.west);
\draw[->] (node4.east) -- (node5.west);
\draw[->] (node5.east) -- (node6.west);
\draw[->] (node6.east) -- (node7.west);
\draw[->] (node7.east) -- (node8.west);

\end{tikzpicture}
\caption{Graph of Theorem \ref{thm2}.}\label{fig3.0}
\end{center}
\end{figure}

\begin{theorem} \label{thm2}
Let $A=[w_{ij}]\in\R^{n\times n}$ be an adjacency matrix with all superdiagonal entries
equal to one and all other entries equal to zero. Figure \ref{fig3.0} displays the 
associated graph for $n=8$. Let $S_{ij}^{ \rm TN}(A)=\mathbf{1}^TL(A,E_{ij})\mathbf{1}$ be the
total network sensitivity with respect to the weight $w_{ij}$, where $L(A,E_{ij})$ is the 
Fr\'echet derivative of $A$ in the direction $E_{ij}$. Consider the addition of one edge to 
the graph defined by $A$. Then the total network sensitivity is maximized by inserting the
edge $e(v_n\rightarrow v_1)$. Thus,
\[
\max_{i,j}S_{ij}^{ \rm TN}(A)=S_{n1}^{ \rm TN}(A).
\]
\end{theorem}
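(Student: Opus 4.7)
The plan is to derive an explicit combinatorial formula for $S_{ij}^{\rm TN}(A)$ and then establish monotonicity in the two parameters $a=i-1$ and $b=n-j$. Using the standard power-series representation of the Fréchet derivative,
\[
L(A,E_{ij})=\sum_{k=1}^{\infty}\frac{1}{k!}\sum_{\ell=0}^{k-1}A^{\ell}E_{ij}A^{k-1-\ell},
\]
together with the factorization $E_{ij}=\mathbf{e}_i\mathbf{e}_j^T$, one gets
\[
S_{ij}^{\rm TN}(A)=\sum_{k=1}^{\infty}\frac{1}{k!}\sum_{\ell=0}^{k-1}\bigl(\mathbf{1}^TA^{\ell}\mathbf{e}_i\bigr)\bigl(\mathbf{e}_j^TA^{k-1-\ell}\mathbf{1}\bigr).
\]
Because $A$ is the superdiagonal nilpotent matrix, $A^\ell$ carries its nonzero entries on the $\ell$th superdiagonal, so a direct inspection gives $\mathbf{1}^TA^{\ell}\mathbf{e}_i=1$ iff $\ell\le i-1$ and $\mathbf{e}_j^TA^{m}\mathbf{1}=1$ iff $m\le n-j$; both factors vanish otherwise.

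Setting $a=i-1$ and $b=n-j$, the inner sum collapses to the count
\[
N_k(a,b)=\#\bigl\{\ell\in\mathbb{Z}:\max(0,k-1-b)\le\ell\le\min(k-1,a)\bigr\},
\]
so that $S_{ij}^{\rm TN}(A)=f(a,b):=\sum_{k\ge 1}N_k(a,b)/k!$ depends only on $a$ and $b$ and is symmetric in them. Since $i,j\in\{1,\dots,n\}$, we have $a,b\in\{0,1,\dots,n-1\}$, and the candidate edge $e(v_n\rightarrow v_1)$ corresponds to the corner $a=b=n-1$. It therefore suffices to show that $f$ is strictly increasing in each variable.

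For the monotonicity step I would compare $N_k(a+1,b)$ with $N_k(a,b)$. Only the upper endpoint of the admissible interval changes, from $\min(k-1,a)$ to $\min(k-1,a+1)$, and a short casework shows the interval gains exactly one integer for $a+2\le k\le a+b+2$ and is unchanged otherwise. Summing against $1/k!$ then yields
\[
f(a+1,b)-f(a,b)=\sum_{k=a+2}^{a+b+2}\frac{1}{k!}>0,
\]
and the analogous argument (or the symmetry $N_k(a,b)=N_k(b,a)$) gives strict monotonicity in $b$. Consequently $f$ attains its unique maximum over $\{0,\dots,n-1\}^2$ at $(a,b)=(n-1,n-1)$, i.e., $(i,j)=(n,1)$, which is the asserted conclusion; since $(n,1)$ is not already an edge of the path graph, this also answers the ``addition of one edge'' problem.

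The main technical obstacle is the delicate bookkeeping in the increment $N_k(a+1,b)-N_k(a,b)$, especially at the boundary $k=a+b+2$, where the old admissible interval is empty but the new interval consists of the single point $\{a+1\}$, so that a unit contribution must be accounted for. Once this endpoint case is handled, the remainder of the argument is transparent, as relaxing the constraint $\{\ell\le a\}$ to $\{\ell\le a+1\}$ can only enlarge the admissible set.
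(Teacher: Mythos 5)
Your proof is correct, and while it proceeds from the same power-series expansion of the Fr\'echet derivative as the paper, it packages the argument differently and, arguably, more cleanly. The paper fixes the total degree $k$, writes $H_{rs}^{(k)}=\sum_{\ell}A^{\ell}E_{rs}A^{k-\ell}$, and shows term by term that $\mathbf{1}^TH_{rs}^{(k)}\mathbf{1}$ is maximized at $(r,s)=(n,1)$ for each $k$ from $0$ to $2n-2$ (and vanishes for $k\geq 2n-1$), then sums. You instead observe that each summand $(\mathbf{1}^TA^{\ell}\mathbf{e}_i)(\mathbf{e}_j^TA^{k-1-\ell}\mathbf{1})$ is an indicator of $\ell\leq i-1$ and $k-1-\ell\leq n-j$, so the whole quantity depends only on $a=i-1$ and $b=n-j$, is symmetric in them, and its increment in $a$ telescopes to the explicit positive sum $\sum_{k=a+2}^{a+b+2}1/k!$. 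This buys you (i) a reduction of the problem to monotonicity of a single two-variable function on a grid, avoiding the paper's case-by-case verification for the ranges $1\leq k\leq n-1$, $k=n$, and $k=2n-2$; (ii) an explicit quantitative formula for the gain from moving one step toward the corner; and (iii) a uniqueness statement (strict increase in each variable) that the paper leaves implicit. One detail worth making explicit in a writeup: the indicator characterizations $\mathbf{1}^TA^{\ell}\mathbf{e}_i=[\ell\leq i-1]$ and $\mathbf{e}_j^TA^m\mathbf{1}=[m\leq n-j]$ rely on $A$ being the nilpotent shift, so the nilpotency $A^n=0$ is silently used via $i-1\leq n-1$; and the ``boundary'' case $k=a+b+2$ you flagged, where the old interval is empty and the new one is the singleton $\{a+1\}$, is handled correctly by your casework.
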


\begin{proof}
Let $r,s$ be integers with $1\leq r,s\leq n$, and let 
$\mathbf{e}_k=[0,\ldots,0,1,0,\ldots,0]^T\in \R^n$ denotes the $k$th axis vector and define
$E_{rs}=\mathbf{e}_r\mathbf{e}_s^T$. Expanding $L(A,E_{rs})$ in terms of powers of $A$ 
yields
\begin{equation}\label{taylor}
S_{rs}^{ \rm TN}(A)=\mathbf{1}^TL(A,E_{rs})\mathbf{1}=
\mathbf{1}^T\Bigg(E_{rs}+\frac{AE_{rs}+E_{rs}A}{2!}+\frac{A^2E_{rs}+
AE_{rs}A+E_{rs}A^2}{3!}+\ldots\Bigg)\mathbf{1}.
\end{equation}
For notational convenience, denote the sum of the terms in the numerators that contain a 
total of $k$ powers of $A$ by $H_{rs}^{(k)}$, i.e., 
$H_{rs}^{(0)}=E_{rs}$, $H_{rs}^{(1)}=AE_{rs}+E_{rs}A$, etc. Then the right-hand side of 
(\ref{taylor}) can be written as
\begin{equation}\label{newtaylor}
S_{rs}^{ \rm TN}(A)=\mathbf{1}^TL(A,E_{rs})\mathbf{1}=
\sum_{k=0}^{\infty}\frac{\mathbf{1}^TH_{rs}^{(k)}\mathbf{1}}{(k+1)!}.
\end{equation}

We will first show that all terms in the sum  in \eqref{newtaylor} except for the $2n-1$ first
ones vanish, and then conclude that $S_{rs}^{ \rm TN}(A)$ is maximized for $r=n$ and 
$s=1$. First, note that $\mathbf{1}^TH_{rs}^{(0)}\mathbf{1}=1$, for all $1\leq r,s\leq n$. 
We turn to the expression $\mathbf{1}^TH_{rs}^{(1)}\mathbf{1}$ and use the 
representation $A=\sum_{i=1}^{n-1}\mathbf{e}_i\mathbf{e}_{i+1}^T$, which yields
\[
H_{rs}^{(1)}=AE_{rs}+E_{rs}A=\sum_{i=1}^{n-1}\mathbf{e}_i\mathbf{e}_{i+1}^T
\mathbf{e}_r\mathbf{e}_s^T+\mathbf{e}_r\mathbf{e}_s^T\sum_{i=1}^{n-1}
\mathbf{e}_i\mathbf{e}_{i+1}^T.
\]
Noting that
\begin{eqnarray*}
\sum_{i=1}^{n-1}\mathbf{e}_i\mathbf{e}_{i+1}^T\mathbf{e}_r\mathbf{e}_s^T
&=& \begin{cases}
\mathbf{e}_{r-1}\mathbf{e}_s^T, & \mbox{if } r\geq 2, \\
0,  & \mbox{if } r=1,
\end{cases} \\
\mathbf{e}_r\mathbf{e}_s^T\sum_{i=1}^{n-1}\mathbf{e}_i\mathbf{e}_{i+1}^T
&=& \begin{cases}
\mathbf{e}_r\mathbf{e}_{s+1}^T, & \mbox{if } s\leq n-1, \\
0,  & \mbox{if } s=n,
\end{cases}
\end{eqnarray*}
it follows that $\max_{r,s}\mathbf{1}^TH_{rs}^{(1)}\mathbf{1}=2$ is achieved for all
$2\leq r\leq n$ and $1\leq s\leq n-1$. 

We turn to the expression $H_{rs}^{(2)}=A^2E_{rs}+AE_{rs}A+E_{rs}A^2$ and obtain similarly 
as above that $\max_{r,s}\mathbf{1}^TH_{rs}^{(2)}\mathbf{1}=3$ is achieved for all
$3\leq r\leq n$ and $1\leq s\leq n-2$. Similarly, 
\[
H_{rs}^{(n-1)}=A^{n-1}E_{rs}+A^{n-2}E_{rs}A+A^{n-3}E_{rs}A^2+\dots+E_{rs}A^{n-1}
\]
and $\max_{r,s}\mathbf{1}^TH_{rs}^{(n-1)}\mathbf{1}=n$ for $r=n$ and $s=1$. Our findings 
yield that $\sum_{k=0}^{n-1}\mathbf{1}^TH_{rs}^{(k)}\mathbf{1}$ is maximized for $r=n$ and
$s=1$. Moreover, $\max_{r,s}\mathbf{1}^TH_{rs}^{(k)}\mathbf{1}=k+1$, for $1\leq k\leq n-1$,
i.e., the maximum is the number of terms in the expression for $H_{rs}^{(k)}$. 

Now consider matrices $H_{rs}^{(k)}$ for $k\geq n$. Letting $k=n$ yields
\[
H_{rs}^{(n)}=A^nE_{rs}+A^{n-1}E_{rs}A+A^{n-2}E_{rs}A^2+\dots+E_{rs}A^n,
\]
where we observe that 
\begin{equation}\label{nilpotent}
A^j=0,\qquad j\geq n.
\end{equation}
Thus, the expression for $H_{rs}^{(n)}$ has $n-1$ nonvanishing terms and 
$\max_{r,s}\mathbf{1}^TH_{rs}^{n+1}\mathbf{1}=n-1$ is achieved for $r=n$ and $s=1$. For
the superscript $2n-2$, we have
\[
H_{rs}^{(2n-2)}=A^{2n-2}E_{rs}+A^{2n-3}E_{rs}A+\dots+A^{n-1}E_{rs}A^{n-1}+
\dots+E_{rs}A^{2n-2}.
\]
Due to \eqref{nilpotent}, the only nonvanishing term in the right-hand side is 
$A^{n-1}E_{rs}A^{n-1}$, and we obtain
\[
\max_{r,s}\mathbf{1}^TH_{rs}^{(2n-2)}\mathbf{1}=1.
\]
The maximum is achieved for $r=n$ and $s=1$. We conclude that
$\sum_{k=0}^{2n-2}\mathbf{1}^TH_{rs}^{(k)}\mathbf{1}$ is maximized for $r=n$ and $s=1$. 
Finally, we note that \eqref{nilpotent} implies that $H_{rs}^{(k)}=0$ for $k\geq 2n-1$. It
follows that $\sum_{k=2n-1}^{\infty}\mathbf{1}^TH_{rs}^{(k)}\mathbf{1}=0$.
The above observations show that
\[
\max_{r,s}S_{rs}^{\rm TN}(A)=
\max_{r,s}\sum_{k=0}^{\infty}\frac{\mathbf{1}^TH_{rs}^{(k)}\mathbf{1}}{(k+1)!}=
\max_{r,s}\sum_{k=0}^{2n-2}\frac{\mathbf{1}^TH_{rs}^{(k)}\mathbf{1}}{(k+1)!}=
\sum_{k=0}^{2n-2}\frac{\mathbf{1}^TH_{n1}^{(k)}\mathbf{1}}{(k+1)!}=S_{n1}^{\rm TN}(A),
\]
and the theorem follows.
\end{proof}

\begin{example}\label{ex3.0}
Let the graph $\mathcal{G}$ be defined as in Theorem \ref{thm2} with $n=8$. Its total 
communicability is $11.03$. Consider the the graph $\mathcal{G}'$ obtained by inserting 
the edge $e(v_8\rightarrow v_1)$. It has total communicability $13.75$. If we instead
insert the edge $e(v_1\rightarrow v_3)$ in the graph $\mathcal{G}$, we obtain the 
graph $\mathcal{G}''$ with total communicability $12.75$. The difference in total 
communicability of the graphs $\mathcal{G}'$ and $\mathcal{G}''$ illustrates that the 
choice of edge to insert is important when we aim to increase the total communicability as
much as possible.  $~~\Box$
\end{example}

\begin{example}\label{ex3.1}
Consider the undirected unweighted graph obtained by replacing every directed edge in 
Figure \ref{fig3.0} by an undirected edge and connecting the vertices $v_1$ and $v_8$ by 
the undirected edge $e(v_1\leftrightarrow v_8)$.  We would like to add an undirected edge 
so that the network communicability is increased the most. For notational convenience, we 
identify the node $v_8$ with $v_0$. Due to the circular symmetry of the graph, the total 
network sensitivities are the same in all directions for which $w_{ij}=0$ and $i\neq j$. 
Also, the left and right Perron vectors of the adjacency matrix $A$ are the same. 
Therefore, we cannot determine which edge to add to increase the network communicability 
the most based on these two approaches. However, the Perron network sensitivities 
$S^{\rm PN}_{i,i+4}$ for the most distant nodes (for $i=0,1,\ldots,4$) are the same, and
larger than the sensitivities $S^{\rm PN}_{ij}$ for the nodes $v_i$ and $v_j$ with 
$j\neq i+4$. This suggests that to increase the Perron network communicability the most, 
one should add edges between the most distant nodes. $~~\Box$
\end{example}

\subsection{Two other methods to increase or decrease network communicability}
Arrigo and Benzi \cite{arrigo2016edge} introduced several methods for the selection of 
edges to be added to (or removed from) a given directed or undirected graph defined by the
adjacency matrix $A$ so as to increase or decrease the network communicability. They 
define the \emph{edge total communicability centrality} of an existing edge 
$e(v_i \rightarrow v_j)$ or of a virtual edge $e(v_i \dashrightarrow v_j)$ as
\[
^e TC(i,j)=(e^A\textbf{1})_i(\textbf{1}^Te^A)_j.
\]
They also define another edge total communicability centrality of an existing edge  
$e(v_i\rightarrow v_j)$ or of a virtual edge $e(v_i \dashrightarrow v_j)$ as
\[
^e gTC(i,j)=C_h(i)C_a(j),
\]
where the \emph{total hub communicability} of vertex $v_i$ and the \emph{total authority 
communicability} of vertex $v_j$ are given by   
\[
C_h(i)=[U\sinh({\Sigma})V^T\textbf{1}]_i \quad \text{and} \quad 
C_a(j)=[V\sinh(\Sigma)U^T\textbf{1}]_j,
\]
\noindent 
respectively. Here the matrices $U$, $\Sigma$, and $V$ are the factors of the singular value 
decomposition $A=U\Sigma V^T$.

\begin{figure}[H]
\begin{center}
\begin{tikzpicture}[
 roundnode/.style={circle, draw=black!60, fill=black!10, very thick, minimum size=6mm},
 ->,>=stealth',shorten >=1pt,node distance=2.5cm,auto,thick]

\node[roundnode]      (node1)                                                   {$1$};
\node[roundnode]      (node2)       [right=3cm of node1]      {$2$};
\node[roundnode]      (node3)       [below=3cm of node2]         {$3$};
\node[roundnode]      (node4)       [below=3cm of node1]         {$4$};
\node[roundnode]      (node5)       [below right=0.7cm and 1.4cm of node1]                                           {$5$};
\node[roundnode]      (node6)       [below=0.7cm of node5]      {$6$};
\node[roundnode]      (node7)       [below right=1.5cm and 1cm of node2]         {$7$};

\draw[<->] (node1.0) -- (node2.180);
\draw[->] (node1.-90) -- (node4.90);
\draw[<->] (node2.-145) -- (node5.45);
\draw[->] (node2.-90) -- (node3.90);
\draw[<->] (node2.-45) -- (node7.135);
\draw[->] (node3.-180) -- (node4.0);
\draw[->] (node3.125) -- (node5.-45);
\draw[<->] (node3.45) -- (node7.-115);
\draw[->] (node5.145) -- (node1.-45);
\draw[->] (node5.-90) -- (node6.90);
\draw[->] (node5.0) -- (node7.160);
\draw[->] (node6.45) -- (node2.-125);
\draw[->] (node6.-45) -- (node3.145);
\draw[->] (node6.-135) -- (node4.45);
\draw[->] (node7.-165) -- (node6.0);

\end{tikzpicture}
\caption{Graph of Example 3.5.}
\label{fig6}
\end{center}
\end{figure}
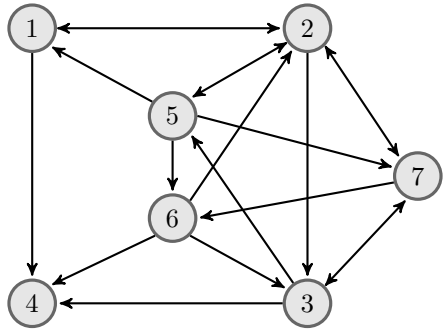

The following example compares the above approaches to the ones of the present paper.

\begin{example}
Regard the directed unweighted graph shown in Figure \ref{fig6}. To obtain an irreducible 
matrix, we use the same procedure as we did in Example \ref{ex_new}, which gives us 
$\delta=10^{-5}$ for this example. We would like to add a 
directed edge so that the network communicability is increased as much as possible. To 
achieve this, the methods by Arrigo and Benzi \cite{arrigo2016edge} described above 
suggest that an edge $e(v_i\rightarrow v_j)$ be inserted into the graph so that the index 
pair $\{i,j\}$ maximizes $^eTC(i,j)$ or $^egTC(i,j)$. For the graph of this example, both 
methods indicate that the edge $e(v_5\rightarrow v_3)$ be added to the graph. Table 
\ref{table10} shows the total network communicability and the Perron 
network communicability after insertion of this edge into the graph. We also evaluate the
Perron root sensitivity ($S^{\rm PR}_{ij}$), the total network sensitivity 
($S^{\rm TN}_{ij}$), and the Perron network sensitivity ($S^{\rm PN}_{ij}$). The total 
network sensitivity is seen to be maximal and the Perron root is increased the most for 
$\{i,j\}=\{7,5\}$, and the Perron network sensitivity is maximized for $\{i,j\}=\{4,5\}$.
Table \ref{table10} shows the addition of the edges $e(v_7\rightarrow v_5)$ or 
$e(v_4\rightarrow v_5)$ to the graph of Figure \ref{fig6} to increase the total network 
communicability and Perron network communicability more than when inserting the edge 
$e(v_5\rightarrow v_3)$. We remark that the selection criteria used in the methods 
\cite{arrigo2016edge} perform well for many graphs, but not for all.~~$\Box$

\begin{table}[H]
\renewcommand{\arraystretch}{1.15}
\begin{center}
\begin{tabular}{cccc}
\hline  
\textbf{Methods} & $\{i,j\}$ & $C^{\rm TN}(A+E_{ij})$ &   $C^{\rm PN}(\hat{A}+E_{ij})$  \\
\hline
 $^eTC(i,j)$                                   &  $\{5, 3\}$    &  117.3601    &    92.4046      \\
 $^egTC(i,j)$                                 &  $\{5, 3\}$    &  117.3601    &    92.4046     \\
$S^{\rm TN}_{ij}$                          &  $\{7, 5\}$    &  127.1123    &    92.4049     \\
$S^{\rm PN}_{ij}$                          &  $\{4, 5\}$    &  124.1918    &    92.4050      \\
$S^{\rm PR}_{ij}$                          &  $\{7, 5\}$    &  127.1123    &    92.4049     \\

\hline
\end{tabular}\\
\vspace{0.05in}
\end{center} 
\caption{Example \ref{ex7}: The second column lists the edge to be added, and the third 
and fourth columns show the total network communicability and the Perron network 
communicability, respectively, when $w_{ij}$ is increased from 0 to 1.}
\label{table10}
\end{table}
\label{ex7}
\end{example}

\section{Efficient methods for large-scale networks }\label{lsn}
This section discusses some numerical methods for estimating the total network 
sensitivity, the Perron network sensitivity, and the Perron root sensitivity for 
large-scale networks. Subsections \ref{arnoldimtd}, \ref{lanczosmtd}, and \ref{KKRS} 
describe five iterative Krylov subspace methods to estimate the total network 
sensitivity. Algorithms for estimating the Perron network sensitivity and the Perron root
sensitivity are considered in Subsection \ref{ptwomtd}.

\subsection{Applications of the Arnoldi process to large-scale network problems}
\label{arnoldimtd}
The evaluation of the total network sensitivity involves the computation of Fr\'{e}chet 
derivatives, which can be done, e.g., by using (\ref{thm1result}). However, this approach 
is quite expensive when the adjacency matrix $A$ is large. This section, therefore, 
describes several iterative Krylov subspace methods to estimate the total network 
sensitivity. These methods are much cheaper than straightforward evaluation of 
(\ref{thm1result}) when the adjacency matrix is large and sparse. Application of 
$1\leq m\ll 2n$ steps of the Arnoldi process to the matrix
\begin{equation}\label{eqmm}
M=\begin{bmatrix}
A & E_{ij} \\
0 & A \\
\end{bmatrix}\in\R^{2n\times 2n}
\end{equation}
with initial unit vector $\mathbf{v}_1=n^{-1/2}[\mathbf{0}^T~ \mathbf{1}^T]^T\in\R^{2n}$
gives the Arnoldi decomposition 
\begin{equation}\label{arnoldi}
MV_m=V_mH_m+\mathbf{g}\mathbf{e}_m^T,
\end{equation}
under the assumption that no breakdown occurs. Here, $\mathbf{0}=[0,\ldots,0]^T\in\R^n$,
$\mathbf{1}=[1,\ldots,1]^T\in\R^n$, and 
$E_{ij}=\mathbf{e}_i\mathbf{e}_j^T\in\R^{n\times n}$. The matrix 
$H_m\in\R^{m\times m}$ is of upper Hessenberg form with nonvanishing subdiagonal entries.
The columns of the matrix 
$V_m=[\mathbf{v}_1,\mathbf{v}_2,\ldots,\mathbf{v}_m]\in\R^{2n\times m}$ form an 
orthonormal basis for the Krylov subspace
\[
\mathcal{K}_m(M,\mathbf{v}_1):=
{\rm span}\{\mathbf{v}_1,M\mathbf{v}_1,\ldots,M^{m-1}\mathbf{v}_1\},
\]
and $\mathbf{g}\in\R^{2n}$ is orthogonal to $\mathcal{K}_m(M,\mathbf{v}_1)$; see, e.g., 
Saad \cite{saad2003iterative} for details on the Arnoldi process. Breakdown of the 
Arnoldi process occurs when a subdiagonal entry of $H_m$ vanishes. We will not dwell on 
this rare situation. It is well known that
\begin{equation}\label{pMv1}
p(M)\mathbf{v}_1=V_m p(H_m)\mathbf{e}_1,
\end{equation}
for all polynomials $p$ of degree at most $m-1$; see, e.g., \cite{Sa2}. 

We apply the decomposition \eqref{arnoldi} to compute an approximation of
$\mathbf{1}^TL(A,E_{ij})\mathbf{1}$ using \eqref{thm1result} as follows: Define the unit 
vector $\mathbf{w}=n^{-1/2}[\mathbf{1}^T~ \mathbf{0}^T]^T\in\R^{2n}$. Then
\begin{equation}\label{approx1}
\mathbf{1}^TL(A,E_{ij})\mathbf{1}=n\mathbf{w}^T\exp_0(M)\mathbf{v}_1\approx
n\mathbf{w}^TV_m\exp_0(H_m)\mathbf{e}_1.
\end{equation}
Bounds for the discrepancy $\exp_0(M)\mathbf{v}_1-V_m\exp_0(H_m)\mathbf{e}_1$ can be found
in, e.g., \cite{BR}. The columns of $V_m$ should be reorthogonalized when computed by the
Arnoldi process to secure that the vector $\mathbf{w}^TV_m$ can be evaluated accurately;
see \cite{saad2003iterative} for a discussion and implementation of the Arnoldi process
with reorthogonalization.

Formula \eqref{defL} provides an alternative approach to computing an approximation of 
$\mathbf{1}^TL(A,E_{ij})\mathbf{1}$ by the Arnoldi process. It follows from \eqref{defL} 
that
\[
\lim_{t\rightarrow 0}\frac{\exp_0(A+t E_{ij}) - \exp_0(A)}{t} = L(A,E_{ij}),
\]
which suggests that we apply the Arnoldi process to the matrices $A$ and $A+tE_{ij}$ for 
some small $t>0$ separately with initial unit vector 
$\widehat{\mathbf{v}}_1=n^{-1/2}\mathbf{1}\in\R^n$. Thus, application of $m$ steps of the 
Arnoldi process to $A$ and $A+tE_{ij}$ with initial vector $\widehat{\mathbf{v}}_1$ yields 
the Arnoldi decompositions
\begin{equation}\label{arndec2}
A\widehat{V}_m=\widehat{V}_m\widehat{H}_m+\widehat{\mathbf{g}}\mathbf{e}_m^T,\qquad
(A+tE_{ij})\widetilde{V}_m=\widetilde{V}_m\widetilde{H}_m+
\widetilde{\mathbf{g}}\mathbf{e}_m^T,
\end{equation}
where the columns of $\widehat{V}_m$ and $\widetilde{V}_m$ form orthonormal bases for the
Krylov subspaces $\mathcal{K}_m(A,\widehat{\mathbf{v}}_1)$ and
$\mathcal{K}_m(A+tE_{ij},\widehat{\mathbf{v}}_1)$, respectively, the matrices 
$\widehat{H}_m,\widetilde{H}_m\in\R^{m\times m}$ are of upper Hessenberg form, and the 
$n$-vectors $\widehat{\mathbf{g}}$ and $\widetilde{\mathbf{g}}$ are orthogonal to the 
Krylov subspaces $\mathcal{K}_m(A,\widehat{\mathbf{v}}_1)$ and
$\mathcal{K}_m(A+tE_{ij},\widehat{\mathbf{v}}_1)$, respectively. For some small $t>0$, we
use the approximations
\[
L(A,E_{ij})\mathbf{1}\approx \frac{\exp_0(A+tE_{ij})-\exp_0(A)}{t} \mathbf{1}\approx
n^{1/2}\frac{\widetilde{V}_m\exp_0(\widetilde{H}_m)\mathbf{e}_1-
\widehat{V}_m\exp_0(\widehat{H}_m)\mathbf{e}_1}{t},
\]
which yield
\begin{equation}\label{approx2}
\mathbf{1}^TL(A,E_{ij})\mathbf{1}\approx 
n\frac{\mathbf{e}_1^T\exp_0(\widetilde{H}_m)\mathbf{e}_1-
\mathbf{e}_1^T\exp_0(\widehat{H}_m)\mathbf{e}_1}{t}.
\end{equation}
We will illustrate the use of the right-hand sides \eqref{approx1} and \eqref{approx2} in 
computed examples in Section \ref{largeexamples}.

\subsection{Applications of the Lanczos biorthogonalization algorithm to large-scale 
network problems}\label{lanczosmtd}
We describe how the expression (\ref{thm1result}) can be approximated by carrying out a
few steps of the Lanczos biorthogonalization algorithm \cite{saad2003iterative}. This
approach is an alternative to the application of the Arnoldi algorithm described above.
Let the vector $\mathbf{v}_1$ and matrix $E_{ij}$ be the same as in Subsection 
\ref{arnoldimtd} and define $\mathbf{w}_1=\mathbf{v}_1$. Application of $1\leq m \ll 2n$ 
steps of the Lanczos biorthogonalization algorithm to the matrix \eqref{eqmm} with unit
starting vectors $\mathbf{v}_1$ and $\mathbf{w}_1$ gives, in the absence of breakdown of 
the recursion formulas, the decompositions 
\[
\begin{array}{rcl}
MV_m&=&V_mT_m+\mathbf{g}_1\mathbf{e}_m^T, \\
M^TW_m&=&W_mT_m^T+\mathbf{g}_2\mathbf{e}_m^T,
\end{array}
\]
where the matrix $T_m\in\R^{m\times m}$ is tridiagonal and the columns of the matrices 
$V_m=[\mathbf{v}_1,\mathbf{v}_2,\ldots,\mathbf{v}_m]\in\R^{2n\times m}$ and 
$W_m=[\mathbf{w}_1,\mathbf{w}_2,\ldots,\mathbf{w}_m]\in\R^{2n\times m}$ form a pair of 
biorthogonal bases for the Krylov subspaces
\begin{eqnarray*}
\mathcal{K}_m(M,\mathbf{v}_1)&:=&
{\rm span}\{\mathbf{v}_1,M\mathbf{v}_1,\ldots,M^{m-1}\mathbf{v}_1\},\\
\mathcal{K}_m(M^T,\mathbf{w}_1)&:=&
{\rm span}\{\mathbf{w}_1,M^T\mathbf{w}_1,\ldots,(M^T)^{m-1}\mathbf{w}_1\},
\end{eqnarray*}
respectively. The vectors $\mathbf{g}_1,\mathbf{g}_2\in\R^{2n}$ satisfy certain 
orthogonality relations; see, e.g., Saad \cite{saad2003iterative} for details. Moreover, we 
have analogously to \eqref{pMv1} that
\[
p(M)\mathbf{v}_1=V_mp(T_m)\mathbf{e}_1
\]
for all polynomials $p$ of degree at most $m-1$.

Let $\mathbf{w}=n^{-1/2}[\mathbf{1}^T~\mathbf{0}^T]^T\in\R^{2n}$. Then an approximation 
of $\mathbf{1}^TL(A,E_{ij})\mathbf{1}$ analogous to the one determined by application of
$m$ steps of the Arnoldi process is given by
\begin{equation}\label{lbapprox1}
\mathbf{1}^TL(A,E_{ij})\mathbf{1}=n\mathbf{w}^T\exp_0(M)\mathbf{v}_1\approx
n\mathbf{w}^TV_m\exp_0(T_m)\mathbf{e}_1.
\end{equation}
To assure that the vector $\mathbf{w}^TV_m$ can be calculated accurately, the columns of 
the matrices $V_m$ and $W_m$ should be rebiorthogonalized when computed by the Lanczos 
biorthogonalization algorithm; see Parlett et al. \cite{PTL} for a discussion.

We also will illustrate the following alternative way of using the Lanczos 
biorthogonalization algorithm to approximate $\mathbf{1}^TL(A,E_{ij})\mathbf{1}$. 
Application of $m$ steps of this algorithm to the matrices $A$ and $A+tE_{ij}$ for some 
small $t>0$ separately with initial unit vectors 
$\widehat{\mathbf{v}}_1=\widehat{\mathbf{w}}_1=n^{-1/2}\mathbf{1}\in\R^n$ yields, assuming
that no breakdown occurs, the decompositions 
\begin{equation}\label{lbapprox2}
\begin{cases}
A\widehat{V}_m=\widehat{V}_m\widehat{T}_m+\widehat{\mathbf{g}}_1\mathbf{e}_m^T,\\
A^T\widehat{W}_m=\widehat{W}_m\widehat{T}_m^T+\widehat{\mathbf{g}}_2\mathbf{e}_m^T,
\end{cases}
\quad \text{and~~} \quad
\begin{cases}
(A+tE_{ij})\widetilde{V}_m=\widetilde{V}_m\widetilde{T}_m+
\widetilde{\mathbf{g}}_1\mathbf{e}_m^T,\\
(A+tE_{ij})^T\widetilde{W}_m=\widetilde{W}_m\widetilde{T}_m^T+
\widetilde{\mathbf{g}}_2\mathbf{e}_m^T,
\end{cases}
\end{equation}
where the columns of the matrices $\widehat{V}_m$ and $\widehat{W}_m$ form a pair of 
biorthogonal bases for the Krylov subspaces 
\begin{eqnarray*}
\mathcal{K}_m(A,\widehat{\mathbf{v}}_1)&:=&
{\rm span}\{\widehat{\mathbf{v}}_1,A\widehat{\mathbf{v}}_1,\ldots, A^{m-1}
\widehat{\mathbf{v}}_1\},\\
\mathcal{K}_m(A^T,\widehat{\mathbf{w}}_1)&:=&
{\rm span}\{\widehat{\mathbf{w}}_1,A^T\widehat{\mathbf{w}}_1,\ldots, (A^T)^{m-1}
\widehat{\mathbf{w}}_1\},
\end{eqnarray*}
respectively, the columns of the matrices $\widetilde{V}_m$ and $\widetilde{W}_m$ form a 
pair of biorthogonal bases for the Krylov subspaces
\begin{eqnarray*}
\mathcal{K}_m(A+tE_{ij},\widehat{\mathbf{v}}_1)&:=&
{\rm span}\{\widehat{\mathbf{v}}_1,(A+tE_{ij})\widehat{\mathbf{v}}_1,\ldots, 
(A+tE_{ij})^{m-1}\widehat{\mathbf{v}}_1\},\\
\mathcal{K}_m((A+tE_{ij})^T,\widehat{\mathbf{w}}_1)&:=&
{\rm span}\{\widehat{\mathbf{w}}_1,(A+tE_{ij})^T\widehat{\mathbf{w}}_1,\ldots, 
((A+tE_{ij})^T)^{m-1}\widehat{\mathbf{w}}_1\},
\end{eqnarray*}
respectively, the matrices $\widehat{T}_m,\widetilde{T}_m\in\R^{m\times m}$ are 
tridiagonal, and the vectors $\widehat{\mathbf{g}}_1,\widehat{\mathbf{g}}_2\in\R^n$ 
satisfy certain orthogonality conditions. 

We apply the decompositions \eqref{lbapprox2} similarly as we used the decompositions
\eqref{arndec2}. Thus, for some small $t>0$, we use the approximations
\[
L(A,E_{ij})\mathbf{1}\approx \frac{\exp_0(A+tE_{ij})-\exp_0(A)}{t} \mathbf{1}\approx
n^{1/2}\frac{\widetilde{V}_m\exp_0(\widetilde{T}_m)\mathbf{e}_1-
\widehat{V}_m\exp_0(\widehat{T}_m)\mathbf{e}_1}{t},
\]
which yield
\begin{equation}\label{lbapprox3}
\mathbf{1}^TL(A,E_{ij})\mathbf{1}\approx 
n\frac{\mathbf{e}_1^T\exp_0(\widetilde{T}_m)\mathbf{e}_1-
\mathbf{e}_1^T\exp_0(\widehat{T}_m)\mathbf{e}_1}{t}.
\end{equation}
We will illustrate the use of the right-hand sides \eqref{lbapprox1} and \eqref{lbapprox3} 
in computed examples in Section \ref{largeexamples}.

\subsection{Another Arnoldi-based method for approximating the Fr\'{e}chet derivative}
\label{KKRS}
Kandolf et al. \cite{kkrs2020computing} introduced several Krylov subspace methods for 
approximating the Fr\'{e}chet derivative. They are defined with the aid of Cauchy
integrals and are based on the Lanczos, Arnoldi, and two-sided Arnoldi processes. We
outline the Arnoldi-based method; its performance will be illustrated in Section 
\ref{largeexamples}.

Consider a directed graph with $n$ nodes and define the associated non-symmetric 
adjacency matrix $A\in\R^{n\times n}$ and the direction matrix 
$E=\eta\mathbf{y}\mathbf{z}^T\in\R^{n\times n}$ of rank one. Here $\eta\in\R$ and 
$\mathbf{y},\mathbf{z}\in\R^n$ are unit vectors. Kandolf et al. \cite{kkrs2020computing} 
described the following approach to approximate the Fr\'{e}chet derivative of $\exp_0(A)$ 
with respect to the direction $E$. 

Application of $1\leq m\ll n$ steps of the Arnoldi process to the matrices $A$ and $A^T$
with initial vectors $\mathbf{y}$ and $\mathbf{z}$, respectively, gives, in the absence of
breakdown of the recursion formulas, the Arnoldi decompositions
\begin{eqnarray*}
AV_m&=&V_mG_m+\mathbf{g}_1\mathbf{e}_m^T,\\ 
A^TW_m&=&W_mH_m+\mathbf{g}_2\mathbf{e}_m^T,
\end{eqnarray*}
where the matrices $G_m,H_m\in\R^{m\times m}$ are of upper Hessenberg form with 
nonvanishing subdiagonal entries. The columns of the matrices 
$V_m=[\mathbf{v}_1,\mathbf{v}_2,\ldots,\mathbf{v}_m]\in\R^{n\times m}$ and 
$W_m=[\mathbf{w}_1,\mathbf{w}_2,\ldots,\mathbf{w}_m]\in\R^{n\times m}$ form orthonormal 
bases for the Krylov subspaces
\begin{eqnarray*}
\mathcal{K}_m(A,\mathbf{y})&:=&{\rm span}\{\mathbf{y},A\mathbf{y},\ldots,
A^{m-1}\mathbf{y}\},\\
\mathcal{K}_m(A^T,\mathbf{z})&:=&{\rm span}\{\mathbf{z},A^T\mathbf{z},\ldots,
(A^T)^{m-1}\mathbf{z}\},
\end{eqnarray*}
respectively, with $\mathbf{v}_1=\mathbf{y}$ and $\mathbf{w}_1=\mathbf{z}$. The vectors 
$\mathbf{g}_1,\mathbf{g}_2\in\R^n$ are orthogonal to 
$\mathcal{K}_m(A,\mathbf{y})$ and $\mathcal{K}_m(A^T,\mathbf{z})$, respectively. Let
\[
B = 
\begin{bmatrix}
G_m & \eta\,\mathbf{e}_1\mathbf{e}_1^T \\
0 & H_m^T \\
\end{bmatrix}.
\]
Then the $m$th Arnoldi approximation of the Fr\'{e}chet derivative is given by
\[
L^{\rm Arn}_m:= \eta V_mX_mW_m^T,
\]
where $X_m$ can be computed using the equation
\[
\exp_0(B)=
\begin{bmatrix}
\exp_0(G_m) & X_m \\
0 & \exp_0(H_m^T) \\
\end{bmatrix}, 
\]
and the $m$th Arnoldi approximation for the total network sensitivity is
\begin{equation}\label{kkrseqn}
\mathbf{1}^TL^{\rm Arn}_m\mathbf{1} = \eta\mathbf{1}^T V_mX_mW_m^T\mathbf{1};
\end{equation}
see, e.g., Kandolf et al. \cite{kkrs2020computing} for details. We refer to this method as
the KKRS Arnoldi method in Section \ref{largeexamples}.

\subsection{Applications of the two-sided Arnoldi and restarted Lanczos methods to 
large-scale network problems}\label{ptwomtd}
The dominant computational burden when studying the Perron root sensitivity and 
evaluating the quantities $C^{\rm PN}(A+E_{ij})$ and $S^{\rm PN}_{ij}$ is the calculation of the 
Perron root and the left and right Perron vectors. For small networks, these
quantities easily can be evaluated by using MATLAB functions \textbf{eig} or \textbf{eigs}.
For large-scale networks, when the graph ${\mathcal G}$ that determines $A$ is directed, 
and $A$ therefore is nonsymmetric, these quantities typically can be computed fairly 
inexpensively by the two-sided Arnoldi method, which was introduced by Ruhe \cite{Ru}, and 
recently has been investigated and improved by Zwaan and Hochstenbach \cite{ZH}. In the 
situation when $A$ is symmetric, a restarted Lanczos method, such as \cite{BCR}, can be 
applied.

\section{Examples with Large-Scale Networks}\label{largeexamples}
This section presents examples with large-scale networks to illustrate the performance of
the numerical methods described in Section \ref{lsn}. The computations are carried out 
using MATLAB R2018b on an Intel Xeon Silver 4116 CPU @ 2.10 GHz
(48 cores, 96 threads) equipped with 256 Gbyte RAM. The USAir97 data set 
used in Example \ref{ex8} can be downloaded from the website \cite{air97&usroads48}, 
the Air500 data set used in Example \ref{ex9} can be downloaded from \cite{air500},
and the usroads-48 data set used in Example \ref{ex10} can be downloaded from 
\cite{air97&usroads48}.

Define for notational convenience the relative difference
\begin{equation}\label{reldiff}
r_{ij}:=|(\text{new approximation})-(\text{previous approximation})|/
|(\text{previous approximation})|,
\end{equation}
where ``previous approximation'' and ``new approximation''  denote approximations of the 
total network sensitivity $S^{\rm TN}_{ij}$ with respect to the direction
$E_{ij}=\mathbf{e}_i\mathbf{e}_j^T$ determined by carrying out $m$ and $m+1$ steps,
respectively, of an iterative method.

When applying the methods of Section \ref{lsn} to estimate the total network sensitivity, 
we terminate the iterations as soon as $r_{ij}<10^{-4}$ for each $S^{\rm TN}_{ij}$. We 
refer to the exact total network sensitivities as the ``exact solution'', and denote the 
approximate solutions obtained by using the right-hand sides of eqs. \eqref{approx1}, 
\eqref{approx2}, and \eqref{kkrseqn} by the ``Arnoldi solution \eqref{approx1}'', the 
``Arnoldi solution \eqref{approx2}'', or the ``KKRS Arnoldi solution'', respectively. 
Similarly, the approximate solutions determined by the approximations \eqref{lbapprox1} 
and \eqref{lbapprox3} are referred to as the ``Lanczos solution \eqref{lbapprox1}'' and 
the ``Lanczos solution \eqref{lbapprox3}'', respectively. We let $\eta=1$ in the KKRS 
Arnoldi method, and $t=2\cdot10^{-5}$ in the methods \eqref{approx2} and 
\eqref{lbapprox3}, as well as in the computation of 
$S^{\rm PN}_{ij}=\mathbf{1}^TL^{\rm PN}(A,E_{ij})\mathbf{1}$. We use the two-sided Arnoldi
method to compute the Perron root, and left and right Perron vectors.

\begin{example}
We consider the network USAir97, which is represented by an undirected weighted graph.
The graph has $332$ nodes, which correspond to American airports in 1997. Undirected 
edges represent flights from one airport to another and the weight of each undirected edge 
indicates the frequency of flights between airports. The adjacency matrix $A$ for the 
graph is irreducible. Our aim is to determine an edge $e(v_i\leftrightarrow v_j)$ 
(or $e(v_i~\LeftArrow{0.25cm}\RightArrow{0.34cm}~v_j)\notin\mathcal{E}$) for $i\neq j$ such
that the network communicability is increased the most when increasing the edge-weight
$w_{ij}$ and $w_{ji}$ slightly. Thus, to preserve symmetry our perturbations are 
multiples of $E_{ij}+E_{ji}$ for different $i$ and $j$.

Since the adjacency matrix $A$ is symmetric, we only compute the Perron root sensitivities 
$S^{\rm PR}_{ij}$, and explore the total network sensitivity and the Perron network 
sensitivity to changes in the weights $w_{ij}$ and $w_{ji}$ of edges $e(v_i\leftrightarrow v_j)$ 
(or $e(v_i~\LeftArrow{0.25cm}\RightArrow{0.34cm}~v_j)\notin\mathcal{E}$) with 
$j>i$ for $i,j\in\{1,2,\ldots,n\}$. Each edge-weight is increased in turn and the quantities 
$S^{\rm TN}_{ij}$ and $S^{\rm PN}_{ij}$ are recorded to find the edge, whose associated 
weight should be increased. We also evaluate the total network communicability and the 
Perron network communicability of the graphs obtained when increasing each pair of weights 
$w_{ij}$ and $w_{ji}$ by one. Table \ref{tableev1} displays the five largest Perron root 
sensitivities $S^{\rm PR}_{ij}$ and shows that the Perron root is increased the most when 
increasing the weight of edge $e(v_{248}\leftrightarrow v_{201})$ slightly. The five 
largest total network sensitivities $S^{\rm TN}_{ij}$ and Perron network sensitivities 
$S^{\rm PN}_{ij}$ with respect to the weight of edge $e(v_i\leftrightarrow v_j)$, together with the total network 
communicability and Perron network communicability attained as the weight of edge $e(v_i\leftrightarrow v_j)$ is 
increased by one are reported in Table \ref{table11}. The table shows both the total network 
communicability and the Perron network communicability to increase the most when the weight 
of the edge $e(v_{118}\leftrightarrow v_{248})$ is increased by one. 
Since $w_{118,248}=w_{248,118}=0.1733$ this means one should increase the frequency
of flights between airport $118$ and airport $248$ to increase the network communicability 
with respect to any one edge-weight the most. In this example, since the largest Perron root
sensitivities $S^{\rm PR}(A)_{i,j}$ are very close in size, the edge selection furnished by the
Perron root sensitivity does not give the largest increase in the Perron network communicability, 
or the total network communicability.

Table \ref{table12} shows the CPU time required for computing the Perron root sensitivity, 
the Perron network sensitivity, and the total network sensitivity. For the latter, 
the CPU time for the exact solution and five approximate solutions determined by the 
Arnoldi  and Lanczos biorthogonalization methods is reported. The table also  displays
the average number of steps needed by the Arnoldi and Lanczos 
biorthogonalization methods to satisfy the stopping criterion. Both the Perron network 
sensitivity and the Perron root sensitivity suggest that the same edge-weight be increased, 
but the computation of the Perron root sensitivity is much cheaper than evaluating the 
Perron network sensitivity for large networks, as shown in Table \ref{table12}. Indeed, the
Perron root sensitivity may be the only practical indicator of which edge-weight to
modify for very large networks. The ``exact solution'' in the table is evaluated by
computing $L(A,E_{ij})$ using \eqref{thm1result} with $f$ the matrix exponential.

We implemented the Arnoldi method \eqref{approx1} with reorthogonalization to avoid that 
the computed results are influenced by loss of orthogonality of the Krylov subspace basis.
Similarly, rebiorthogonalization is performed for the Lanczos biorthogonalization method 
\eqref{lbapprox1}. 

Reducing the tolerance for $r_{ij}$ in \eqref{reldiff} to $10^{-5}$ to decide when to 
terminate the iterations did not change the edges selected when using the Arnoldi-based or
Lanczos-based methods. 

Figure \ref{fig7} shows the exact total network sensitivities and 
approximate total network sensitivities determined by the five iterative Krylov subspace 
methods described in Section \ref{lsn}. Specifically, assume that we are interested in evaluating 
the total network sensitivity $S^{\rm TN}_{ij}$ for $i\ne j$ and $j>i$ in $\ell$ 
different directions. We consider increasing one edge-weight of $A$ at each step, then calculate its 
associated exact (or approximate)
total network sensitivity $S^{\rm TN}_{ij}$, denote it by $s_1^{exact}$ (or $s_1^{approx}$), 
and store it in the vectors $\mathbf{s}^{exact}=[s_1^{exact}]$ (or 
$\mathbf{s}^{approx}=[s_1^{approx}]$). 
Repeating this procedure for the $\ell$ directions, we get the vectors
$\mathbf{s}^{exact}=[s_1^{exact},s_2^{exact},\ldots,s_\ell^{exact}]^T\in\R^{\ell}$ 
and $\mathbf{s}^{approx}=[s_1^{approx},s_2^{approx},\ldots,s_\ell^{approx}]^T\in\R^{\ell}$.
Each subfigure of Figure \ref{fig7} displays the vectors $\mathbf{s}^{exact}$ on the vertical 
axis and $\mathbf{s}^{approx}$ for one of the Krylov methods on the horizontal axis. Each 
pair of vector entries $\{s_i^{approx},s_i^{exact}\}$ gives one dot on the graph; if the 
approximations are accurate, then $s_i^{approx}$ is close to $s_i^{exact}$ and the resulting 
graph is on a straight line. We conclude that the Arnoldi method \eqref{approx2}
is the fastest but the KKRS Arnoldi method is most reliable. The Arnoldi methods demand $\overline{m}=9.4774$,
$\overline{m}=4.9942$, or $\overline{m}=6.7381$ average number of steps to satisfy the stopping criterion, 
while the Lanczos biorthogonalization methods require $10.9233$ or $4.9941$ average number 
of steps. The number of matrix-vector product evaluations needed by the Arnoldi methods \eqref{approx1} 
and \eqref{approx2} are smaller, since each step only requires one matrix-vector product evaluation, while each 
step of the KKRS Arnoldi and Lanczos biorthogonalization methods requires two matrix-vector 
product evaluations, one with $A$ (or $M$), and one with $A^T$ (or $M^T$). Each 
matrix-vector product evaluation with $M$ requires two matrix-vector product evaluations 
with $A$. We conclude from Figure \ref{fig7} that the application of the Arnoldi method 
based on \eqref{approx2} requires the fewest matrix-vector product evaluations followed by
the Lanczos biorthogonalization method based on \eqref{lbapprox3}.$~~\Box$

\begin{table}[H]
\renewcommand{\arraystretch}{1.15}
\begin{center}
\begin{tabular}{cc}
\hline  
$\{i,j\}$ & $S^{\rm PR}_{ij}$ \\
\hline
$\{248,201\}$     &  0.0471    \\
$\{248,47\}$       &  0.0470      \\
$\{201,47\}$       &  0.0464      \\
$\{248,118\}$     &  0.0452       \\
$\{118,201\}$     &  0.0446       \\
\hline
\end{tabular}\\
\vspace{0.05in}
\end{center} 
\caption{Example \ref{ex8}: The five largest Perron root sensitivities along directions for which $i\neq j$.}
\label{tableev1}
\end{table}

\begin{table}[H]
\renewcommand{\arraystretch}{1.15}
\begin{center}
\begin{tabular}{cccccc}
\hline  
$\{i,j\}$ & $S^{\rm TN}_{ij}$ & $C^{\rm TN}(A+E_{ij}+E_{ji})$ & $\{i,j\}$ & $S^{\rm PN}_{ij}$ & 
$C^{\rm PN}(A+E_{ij}+E_{ji})$ \\
\hline
$\{118,248\}$   & 408.5090   & 5605.03    & $\{118,248\}$    &   412.30   &  5600.87  \\
$\{47,118\}$     & 402.6977   & 5597.83    & $\{47,118\}$      &   406.85   &  5594.19  \\
$\{118,201\}$   & 401.8136   & 5596.48   & $\{118,201\}$     &   405.79   &  5592.67 \\
$\{118,261\}$   & 396.2688   & 5590.28   & $\{118,261\}$     &   401.14   &  5590.54 \\
$\{67,118\}$     & 386.3636   & 5576.64   & $\{67,118\}$       &   390.32   &  5573.10\\
\hline
\end{tabular}\\
\vspace{0.05in}
\end{center} 
\caption{Example \ref{ex8}: The five largest total network sensitivities $S^{\rm TN}_{ij}$
and Perron network sensitivities $S^{\rm PN}_{ij}$ with respect to changes in the weights 
of the edge $e(v_i \leftrightarrow v_j)$ with $i\neq j$ and $j>i$, as well 
as the corresponding total network communicability and Perron network communicability 
when the weight of the edge $e(v_i\leftrightarrow v_j)$ is increased by one.}\label{table11}
\end{table}

\begin{table}[H]
\renewcommand{\arraystretch}{1.15}
\begin{center}
\begin{tabular}{lcccc}
\hline  
\textbf{Communicability} & \textbf{Method} & \textbf{Average steps}& \textbf{Elapsed time (in seconds)} \\
\hline  
Perron network communicability   &   Perron root sensitivity                         &  N/A           &  0.1                        \\
                                                      &   Perron network sensitivity                  &  N/A            & 3180 (53 mins)       \\
\hline
                                                       &    Exact solution                                   &  N/A            &  56697 (15.7 hrs)     \\
                                                       &    Arnoldi solution \eqref{approx1}        & 9.4774       &  253                     \\
Total network communicability       &    Arnoldi solution \eqref{approx2}        & 4.9942       &  105                       \\
                                                       &    Lanczos solution \eqref{lbapprox1}   & 10.9233     &  1032                       \\
                                                       &    Lanczos solution \eqref{lbapprox3}   & 4.9941       &  113                       \\
                                                       &    KKRS Arnoldi solution                       & 6.7381       &  226                        \\
\hline
\end{tabular}\\
\vspace{0.05in}
\end{center} 
\caption{CPU time required for evaluating the Perron root sensitivity, the Perron 
network sensitivity, the exact solution, and the five approximate solutions, as well as 
the average number of steps $\overline{m}$ demanded by each iterative method to satisfy the stopping 
criterion.}\label{table12}
\end{table}

\begin{figure}[H]
\begin{center}
\includegraphics[scale=0.42]{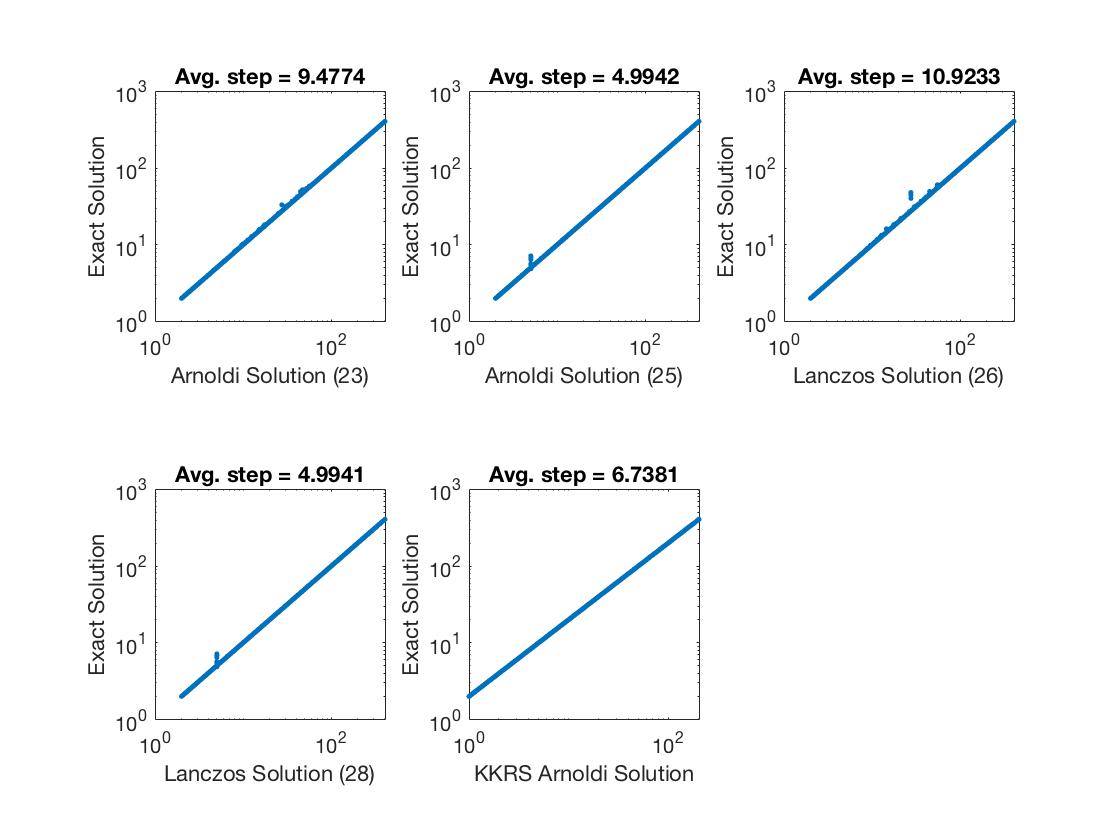}
\caption{USAir97: Comparison of the exact and approximate solutions using the five 
iterative Krylov subspace methods described in Section \ref{lsn}. The heading of each 
subplot shows the average number of steps $\overline{m}$ required by each iterative method to satisfy the stopping 
criterion.}\label{fig7}
\end{center}
\end{figure}

\label{ex8}
\end{example}
 
\begin{example} 
The network Air500 is represented by an unweighted directed graph with $500$ nodes, which model the
top 500 airports worldwide based on total passenger volume. Flights are represented by
edges; the graph is based on flights within one year from July 1, 2007, to June 30, 2008. The
graph is strongly connected. The adjacency matrix associated with the graph therefore is
irreducible.

In this example, we want to insert an edge $e(v_i\dashrightarrow v_j)\notin\mathcal{E}$ 
that enhances the network communicability the most. We study the Perron root sensitivity  
by computing the Perron root sensitivity matrix of the adjacency matrix $A$, and 
investigate the total network sensitivity and the Perron network sensitivity with respect 
to weights $w_{ij}$, where $w_{ij}=0$ and $i\neq j$. Each edge is added in turn, and the 
quantities $S^{\rm TN}_{ij}$ and $S^{\rm PN}_{ij}$ are recorded to find the optimal edge. 
We also evaluate the total network communicability and the Perron network communicability 
of the graph attained when $w_{ij}$ is changed from $0$ to $1$. 

We recall that the Perron root is increased the most when the Perron root sensitivity 
$S^{\rm PR}_{ij}$ is maximized. In this example, we are only interested in adding an edge 
when $w_{ij}=0$ and $i\neq j$. The five largest Perron root sensitivities 
$S^{\rm PR}_{ij}$ of interest are listed in Table \ref{tableev2}. The table indicates that
the Perron root is increased the most by inserting the edge 
$e(v_{224}\rightarrow v_{257})$ into the graph. The network communicability also is 
increased the most by adding this edge as is seen in Table \ref{table13}. This table 
displays the five largest total network sensitivities and Perron network sensitivities
with respect to changes in $w_{ij}$, where $w_{ij}=0$ and $i\neq j$, as well as the total 
network communicability and the Perron network communicability of the graph obtained when 
changing $w_{ij}$ from 0 to 1. The table shows both the total network communicability and 
the Perron network communicability to increase the most by adding an edge with either the 
largest total network sensitivity $S^{\rm TN}_{ij}$ or the largest Perron network 
sensitivity $S^{\rm PN}_{ij}$. The table suggests that the two busiest airports ($224$, 
JFK airport in NY, and $257$, LGA airport in NY), be connected. This can be done by adding
a shuttle bus between them. 

The CPU time required for computing the Perron root sensitivity, the Perron network 
sensitivity, the exact solution, and the five approximated solutions, together with the 
average number of steps that each iterative method carried out, 
are reported in Table \ref{table14}. Both the Perron network sensitivity and the Perron 
root sensitivity suggest the insertion of the same edge to maximize the Perron network 
communicability, but computing the Perron root sensitivity of $A$ is much cheaper than 
computing the Perron network sensitivity for large networks, as shown in Table \ref{table14}. 
While computing the total network sensitivity, reorthogonalization is carried out for the 
Arnoldi method \eqref{approx1} and rebiorthogonalization is performed for the Lanczos 
biorthogonalization method \eqref{lbapprox1}. 

For this example, the approximations in the stopping criterion \eqref{reldiff} are 
large. Therefore, the Lanczos  biorthogonalization method \eqref{lbapprox1} 
requires many steps to satisfy this criterion when $r_{ij}<10^{-4}$. This makes application of 
the method expensive.  We therefore replaced the matrix $M$ by $M/3$. Then the Lanczos 
biorthogonalization method \eqref{lbapprox1} requires fewer steps to satisfy the analogue 
of \eqref{reldiff} with $r_{ij}<10^{-4}$. This modification showed the correct edge to insert. 
Making the tolerance for $r_{ij}$ smaller than $10^{-4}$ to determine when to 
terminate the iterative methods did not change the edge selected.  

Figure \ref{fig8} is analogous to Figure \ref{fig7} and shows that 
the approximate solution obtained using KKRS Arnoldi method to be accurate, followed 
by the one obtained by Lanczos biorthogonalization method \eqref{lbapprox3}, but the
latter is faster and requires a smaller average number of steps. The solutions obtained by 
the Arnoldi method \eqref{approx2} can be seen to be less accurate than those obtained 
with the KKRS Arnoldi method, and the Lanczos biorthogonalization method \eqref{lbapprox3} (the
dots are not as close to a straight line). The solution obtained by the Arnoldi \eqref{approx1} and 
the Lanczos biorthogonalization methods \eqref{lbapprox1} are the least accurate ones compared to 
those obtained by the other three iterative methods. On the other hand, each step of the Arnoldi
method \eqref{approx1} requires one matrix-vector product evaluation with $M$ (two 
matrix-vector product evaluations with $A$), and each step of the KKRS Arnoldi method 
demands two matrix-vector product evaluations, one with $A$, and one with $A^T$, while 
each step of the Arnoldi method \eqref{approx2} only requires one matrix-vector product 
evaluation with $A$. The Lanczos biorthogonalization method \eqref{lbapprox3} is 
competitive, each step of which requires one matrix-vector product evaluation with $A$ and one with $A^T$.
  
Figure \ref{fig9} displays the exact solution and approximate solutions 
determined by the Arnoldi method \eqref{approx2} by choosing different stopping criteria. The 
method needs $11.8587$ average number of steps ($25.66$ mins) to satisfy the 
stopping criterion $r_{ij}<10^{-5}$. The figure shows that the approximation error is smaller when choosing 
$r_{ij}<10^{-5}$ and the number of matrix-vector product evaluations is still the smallest one.
Table \ref{table15} displays the airport labels
corresponding to the airports of Table \ref{table13}.  $~~\Box$

\begin{table}[H]
\renewcommand{\arraystretch}{1.15}
\begin{center}
\begin{tabular}{cc}
\hline  
$\{i,j\}$ & $S^{\rm PR}_{ij}$  \\
\hline
$\{224,257\}$   &  0.01341        \\
$\{257,224\}$   &  0.01335        \\
$\{287,224\}$   &  0.01240         \\
$\{261,24\}$     &  0.01230          \\
$\{224,287\}$   &  0.01228          \\
\hline
\end{tabular}\\
\vspace{0.05in}
\end{center} 
\caption{Example \ref{ex8}: The five largest Perron root sensitivities along directions 
for which $w_{ij}=0$ and $i\neq j$.}
\label{tableev2}
\end{table}

\begin{table}[H]
\renewcommand{\arraystretch}{1.15}
\begin{center}
\begin{tabular}{ccccccc}
\hline  
$\{i,j\}$ & $S^{\rm TN}_{ij}$ & $C^{\rm TN}(A+E_{ij})$ & $\{i,j\}$ & $S^{\rm PN}_{ij}$ & 
$C^{\rm PN}(A+E_{ij})$   \\
\hline
$ \{224,257\}$  & $2.5260\times 10^{36}$   & $1.9418\times 10^{38}$   & $ \{224,257\}$ & $2.5237\times 10^{36}$ &  $1.9386\times 10^{38}$ \\
$ \{257,224\}$  &$2.5161\times 10^{36}$    & $1.9417\times 10^{38}$   & $ \{257,224\}$ & $2.5103\times 10^{36}$  & $1.9385\times10^{38}$\\
$ \{261,24\}$    & $2.3817\times 10^{36} $  & $1.9404\times 10^{38}$   & $ \{261,24\}$   & $2.3810\times 10^{36}$ &  $1.9372\times 10^{38}$\\
$ \{19,124 \}$   & $2.3576\times 10^{36}$   & $1.9401\times 10^{38}$   & $ \{19,124\}$   & $2.3587\times 10^{36}$ &  $1.9369\times 10^{38}$\\
$ \{24, 261\}$   & $2.3529\times 10^{36}$   & $1.9401\times 10^{38}$   & $ \{24,261\}$   & $2.3457\times 10^{36}$ &  $1.9368\times 10^{38}$ \\
\hline
\end{tabular}\\
\vspace{0.05in}
\end{center} 
\caption{Example \ref{ex9}: The five largest total network sensitivities and Perron 
network sensitivities to changes in $w_{ij}$, where $w_{ij}=0$ and $i\neq j$, together 
with the corresponding total network communicability and Perron network 
communicability when $w_{ij}$ is increased from 0 to 1.}
\label{table13}
\end{table}
 
\begin{table}[H]
\renewcommand{\arraystretch}{1.15}
\begin{center}
\begin{tabular}{lcccc}
\hline  
\textbf{Communicability} &\textbf{Method} & \textbf{Average steps}& \textbf{Elapsed time (in seconds)} \\
\hline 
Perron network communicability     &  Perron root sensitivity                       & N/A &   0.65 \\
                                                          &  Perron network sensitivity               & N/A &   9228 (2.56 hrs) \\
  \hline
                                                          &  Exact solution                                  & N/A  &    500172 (139hrs) \\
                                                          &   Arnoldi solution \eqref{approx1}      &  16.9994   &   2070(34.5 mins)\\
Total network communicability          &   Arnoldi solution \eqref{approx2}      &  10.2312    &    1102 (18.4 mins)  \\
                                                          &  Lanczos solution \eqref{lbapprox1}  &  40.1211   &  26381 (7.3 hrs)    \\
                                                          &  Lanczos solution \eqref{lbapprox3}  &  8.0560    &  934 ( 15.6 mins)\\
                                                          &  KKRS Arnoldi solution                      &  9.9561    &  1521 (25.4 mins) \\
\hline
\end{tabular}\\
\vspace{0.05in}
\end{center} 
\caption{CPU time for the evaluation of the Perron root sensitivity, the Perron network 
sensitivity, the exact solution, and the five approximated solutions, as well as the 
average number of steps $\overline{m}$ that each iterative method needed to satisfy the stopping criterion.}
\label{table14}
\end{table}

\begin{table}[H]
\renewcommand{\arraystretch}{1.15}
\begin{center}
\begin{tabular}{cc}
\hline  
\textbf{Label} & \textbf{Airport} \\
\hline
19  &  AMS airport in NL \\
24  &  ATL airport in GA, USA \\
124 & DFW airport in TX, USA \\
224 & JFK airport in NY, USA \\
257 & LGA airport in NY, USA \\
261 & LHR airport in UK \\
\hline
\end{tabular}\\
\vspace{0.05in}
\end{center} 
\caption{Airports for airport labels shown in Table \ref{table13}.}
\label{table15}
\end{table}

\begin{figure}[H]
\begin{center}
\includegraphics[scale=0.42]{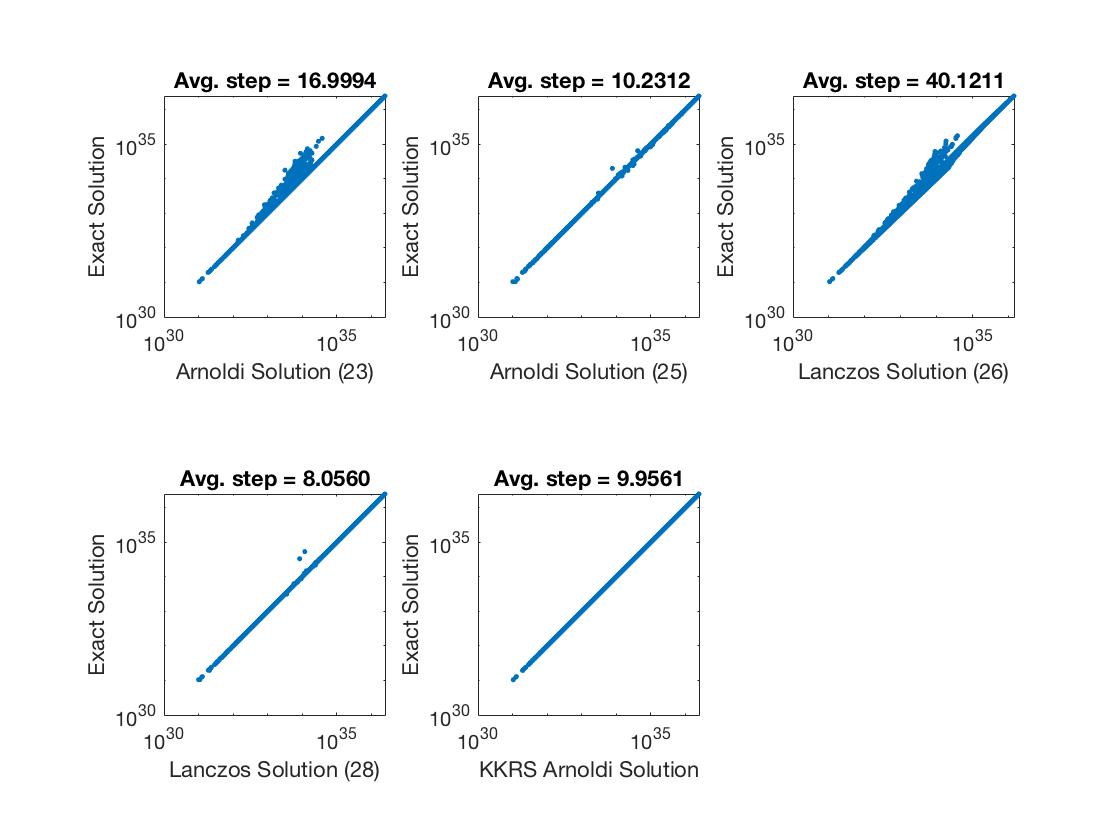}
\caption{Air500: Comparison of the exact and approximate solutions using the five 
iterative Krylov subspace methods described in Section \ref{lsn}. The heading of each 
subplot is the average number of steps $\overline{m}$ that each iterative method demands to satisfy the 
stopping criterion.}
\label{fig8}
\end{center}
\end{figure}

\begin{figure}[H]
\begin{center}
\includegraphics[scale=0.42]{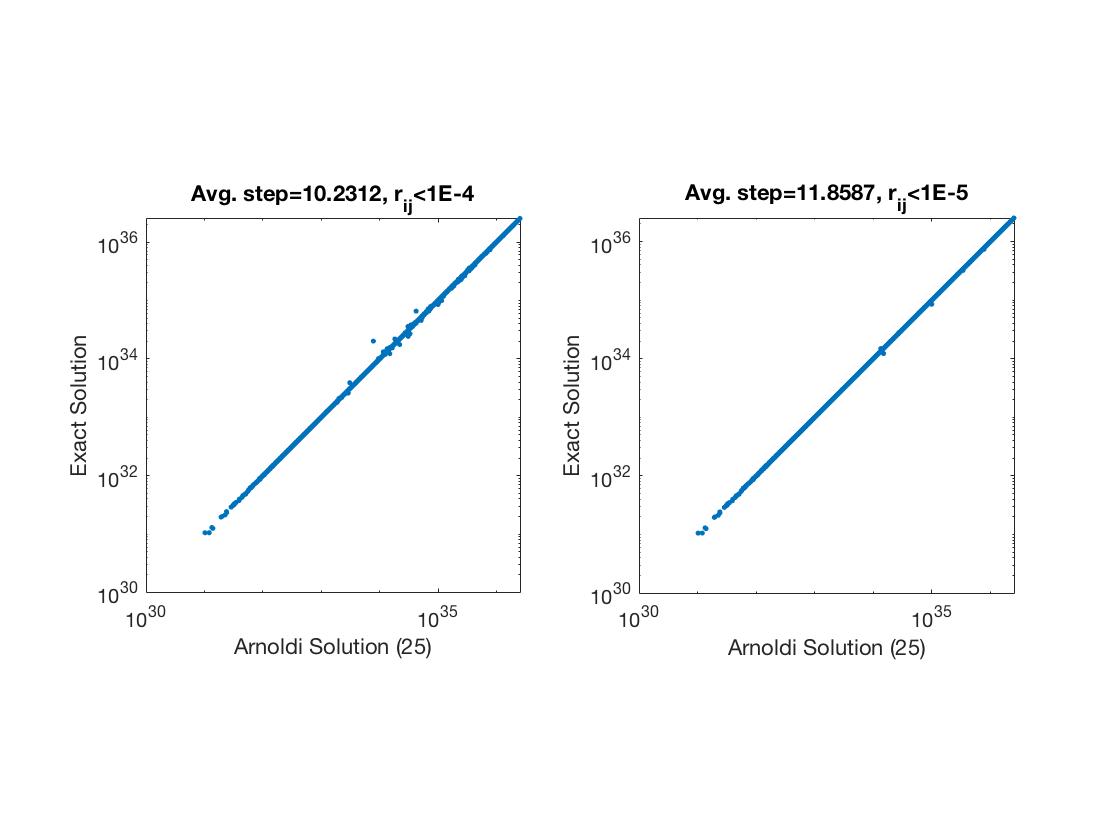}
\caption{Air500: Comparison of the exact and approximate solutions using the Arnoldi 
method \eqref{approx2} by choosing different stopping criterions. The heading of each 
subplot is the chosen stopping criterion and the average number of steps $\overline{m}$ 
that the method demands to satisfy the corresponding stopping criterion.}
\label{fig9}
\end{center}
\end{figure}

\label{ex9}
\end{example}

\begin{example}
We consider the network usroads-48, which corresponds to the continental US road network, 
and is represented by an unweighted undirected graph. The graph has $126146$ nodes, which 
correspond to intersections and road endpoints. Undirected edges represent roads which 
connect the intersections and endpoints. Our aim is to insert an edge 
$e(v_i~\LeftArrow{0.25cm}\RightArrow{0.34cm}~v_j)\notin\mathcal{E}$ such that the network 
communicability is increased the most. Results of Examples \ref{ex8} and \ref{ex9} show 
that computing the Perron root sensitivity of the modified adjacency matrix $\hat{A}$ or 
adjacency matrix $A$ is much cheaper than computing the Perron network sensitivity and 
total network sensitivity for large networks. Also note that it is not straightforward to 
determine whether a large network is irreducible. We therefore add a perturbation matrix 
$\delta A = \delta\cdot\mathbf{1}\mathbf{1}^T$ of small norm to the adjacency matrix $A$. 
This guarantees that the matrix considered is irreducible. To determine the value of 
$\delta$, we use the same procedure as in Example \ref{ex_new}. While decreasing the value
of $\delta$, we use the available computed Perron vectors for the previous $\delta$-value as 
initial approximations to compute new Perron vectors associated with the new smaller value of
$\delta$. We obtain in this manner the irreducible matrix $\hat{A}=A+\delta A$ with 
$\delta = 10^{-7}$. Note that the matrix $\hat{A}$ is not explicitly formed; we evaluate
matrix-vector products with $\hat{A}$ and $\hat{A}^T$ without explicitly storing these
matrices. This keeps both the storage and computing time low.

Table \ref{tableev3} displays the five largest Perron root sensitivities $S^{\rm PR}_{ij}$ 
and shows that the Perron root is increased the most by inserting the edge 
$e(v_{44182}\leftrightarrow v_{44035})$, that is, building a new road between endpoints 
44182 and 44035. The network of the present example is too large to make the evaluation
of total network sensitivities and the exact solution practical. $~~\Box$

\begin{table}[H]
\renewcommand{\arraystretch}{1.15}
\begin{center}
\begin{tabular}{cc}
\hline  
$\{i,j\}$ & $S^{\rm PR}_{ij}$  \\
\hline
$\{44182,44035\}$    &  0.0898        \\
$\{44067,44323\}$    &  0.0846        \\
$\{44154,44087\}$    &  0.0845         \\
$\{44182,44133\}$    &  0.0797          \\
$\{44182,44294\}$    &  0.0795          \\
\hline
\end{tabular}\\
\vspace{0.05in}
\end{center} 
\caption{Example \ref{ex10}: The five largest Perron root sensitivities along directions 
for which $w_{ij}=0$ and $i\neq j$.}
\label{tableev3}
\end{table}

\label{ex10}
\end{example}

The examples of this section, as well as numerous other numerical experiments, indicate 
that the Arnoldi-based method \eqref{approx2}, the KKRS Arnoldi method, and the Lanczos 
biorthogonalization method \eqref{lbapprox3} to perform better than the Arnoldi-based 
method \eqref{approx1} and the Lanczos biorthogonalization method \eqref{lbapprox1}. The 
Arnoldi method \eqref{approx2} typically requires fewer matrix-vector product evaluations 
and less computer storage than the KKRS Arnoldi and Lanczos biorthogonalization methods 
\eqref{lbapprox3}. This suggests that the former method may be attractive to use to 
compute the total network sensitivities $S^{\rm TN}_{ij}$ when the evaluation of 
matrix-vector products is very expensive or computer storage is scarce, otherwise the KKRS
Arnoldi method should be applied. For large networks, one can compute the Perron root 
sensitivity. Its computation is much cheaper than the evaluation of the Perron network 
sensitivity and total network sensitivity. The latter measures therefore are less attractive
to use for large networks.

\section{Conclusion}\label{conclusion}
This paper explores the sensitivity of global communicability measures to small local changes in
a network. In particular, we investigate the sensitivity of the network communicability by
increasing or decreasing edge-weights, or adding or removing edges such that the total 
network communicability or Perron network communicability significantly increases or 
decreases. The latter communicability measure is new and has the advantage of being easy
to apply to very large networks. Efficient Krylov subspace-type methods for estimating the 
total network sensitivity, the Perron root sensitivity, and the Perron network 
sensitivity are introduced and compared. Computed examples illustrate the feasibility of 
the methods described.

\section*{Acknowledgments}
The authors would like to thank Ernesto Estrada, Marcel Schweitzer, and the referees for 
comments. Work of ODCC and LR was supported in part by NSF grant DMS-1720259. Work by SN 
was partially supported by a grant from SAPIENZA Universit\`a di Roma and by INdAM-GNCS.

\bibliographystyle{plain}

\begin{thebibliography}{99}

\bibitem{air500}
Air500 Data. \url{https://www.dynamic-connectome.org/?page_id=25}

\bibitem{arrigo2016edge}
F. Arrigo and M. Benzi, {\em Edge modification criteria for enhancing the communicability 
of digraphs}, SIAM J. Matrix Anal. Appl., 37 (2016), pp. 443--468.



\bibitem{BCR}
J. Baglama, D. Calvetti, and L. Reichel, {\em IRBL: An implicitly restarted block Lanczos 
method for large-scale Hermitian eigenproblems}, SIAM J. Sci. Comput., 24 (2003), pp. 
1650--1677.

\bibitem{barrat2004weighted}
A. Barrat, M. Barthelemy, and A. Vespignani, {\em Weighted evolving networks: Coupling 
topology and weight dynamics}, Physical Rev. Lett., 92 (2004), Art. 228701.

\bibitem{BR}
B. Beckermann and L. Reichel, {\em Error estimation and evaluation of matrix functions via
the Faber transform}, SIAM J. Numer. Anal., 47 (2009), pp. 3849--3883.

\bibitem{benzi2013total}
M. Benzi and C. Klymko, {\em Total communicability as a centrality measure}, J. Complex 
Networks, 1 (2013), pp. 124--149.

\bibitem{biggs1993}
N. Biggs, {\em Algebraic Graph Theory}, Cambridge University Press,
Cambridge, 1993.


\bibitem{dlcmr1}
O. De la Cruz Cabrera, M. Matar, and L. Reichel, {\em Analysis of directed networks via 
the matrix exponential}, J. Comput. Appl. Math., 355 (2019), pp. 182--192.

\bibitem{dlcmr}
O. De la Cruz Cabrera, M. Matar, and L. Reichel, {\em Centrality measures for 
node-weighted networks via line graphs and the matrix exponential}, Numer. Algorithms, 
88 (2021), pp. 583--614. 

\bibitem{estrada2011structure}
E. Estrada, {\em The Structure of Complex Networks: Theory and Applications}, Oxford 
University Press, Oxford, 2011.

\bibitem{estrada2021}
E. Estrada, {\em Informational cost and networks navigability}, Appl. Math. Comput.,
397 (2021), Art. 125914.


\bibitem{estrada2008}
E. Estrada and N. Hatano, {\em Communicability in complex networks}, Phys. Rev. E, 77
(2008), Art. 036111.

\bibitem{estrada2011physics}
E. Estrada, N. Hatano, and M. Benzi, {\em The physics of communicability in complex 
networks}, Physics Reports, 514 (2012), pp. 89--119.


\bibitem{estrada2009communicability}
E. Estrada, D. J. Higham, and N. Hatano, {\em Communicability betweenness in complex 
networks}, Physica A, 388 (2009), pp. 764--774.


\bibitem{estrada2005subgraph}
E. Estrada and J. A. Rodriguez-Velazquez, {\em Subgraph centrality in complex networks}, 
Phys. Rev. E, 71 (2005), Art. 056103.

\bibitem{higham2008functions}
N. J. Higham, {\em Functions of Matrices: Theory and Computation}, SIAM, Philadelphia, 
2008.

\bibitem{kkrs2020computing}
P. Kandolf, A. Koskela, S. D. Relton, and M. Schweitzer, {\em Computing low-rank 
approximations of the Fr\`{e}chet derivative of a matrix function using Krylov subspace 
methods}, Numer. Linear Algebra Appl., (2021), Art. e2401. 

\bibitem{horn1985matrix}
R. A. Horn and C. R. Johnson, {\em Matrix Analysis}, Cambridge University Press, 
Cambridge, 1985.

\bibitem{milanese2010approximating}
A. Milanese, J. Sun, and T. Nishikawa, {\em Approximating spectral impact of structural
perturbations in large networks}, Phys. Rev. E, 81 (2010), Art. 046112.

\bibitem{newman2004analysis}
M. E. J. Newman, {\em Analysis of weighted networks}, Phys. Rev. E, 70 (2004), Art. 056131.

\bibitem{newman2010}
M. E. J. Newman, {\em Networks: An Introduction}, Oxford University Press, Oxford, 2010.




\bibitem{OR}
J. Ortega and W. Rheinboldt, {\em Iterative Solution of Nonlinear Equations in Several 
Variables}, SIAM, Philadelphia, 2000.

\bibitem{PTL}
B. N. Parlett, D. R. Taylor, and Z. A. Liu, {\em A look-ahead Lanczos algorithm for 
unsymmetric matrices}, Math. Comp., 44 (1985), pp. 105--124.

\bibitem{Ru}
A. Ruhe, {\em The two-sided Arnoldi algorithm for nonsymmetric eigenvalue problems}, in 
Matrix Pencils, eds. B. K\aa gstr\"om and A. Ruhe, Springer, Berlin, 1983, pp. 104--120.

\bibitem{Sa2}
Y. Saad, {\em Analysis of some Krylov subspace approximations to the matrix exponential 
operator}, SIAM J. Numer. Anal., 29 (1992), pp. 209--228.

\bibitem{saad2003iterative}
Y. Saad. {\em Iterative Methods for Sparse Linear Systems}, SIAM, Philadelphia, 2003.





\bibitem{air97&usroads48}
USAir97 data and usroads-48 Data. \url{https://sparse.tamu.edu}

\bibitem{Wi}
J. H. Wilkinson, {\em Sensitivity of eigenvalues II}, Utilitas Mathematica, 30 (1986), pp.
243–286.

\bibitem{ZH}
I. N. Zwaan and M. E. Hochstenbach, {\em Krylov--Schur-type restarts for the two-sided 
Arnoldi method}, SIAM J. Matrix Anal. Appl., 38 (2017), pp. 297--321.



\end{thebibliography}

\end{document}